\newtheorem{theorem}{Theorem}[section]
\newtheorem{lemma}[theorem]{Lemma}
\newtheorem{proposition}[theorem]{Proposition}
\newtheorem{corollary}[theorem]{Corollary}
\newtheorem{remark}[theorem]{Remark}
\numberwithin{equation}{section}
\begin{document}

\baselineskip=15pt

\title[Affine Hermitian--Einstein connections]{Hermitian--Einstein
connections on principal bundles over flat affine manifolds}

\author[I. Biswas]{Indranil Biswas}

\address{School of Mathematics, Tata Institute of Fundamental
Research, Homi Bhabha Road, Bombay 400005, India}

\email{indranil@math.tifr.res.in}

\author[J. Loftin]{John Loftin}

\address{Department of Mathematics and Computer Science, Rutgers
University at Newark, Newark, NJ 07102, USA}

\email{loftin@rutgers.edu}

\subjclass[2000]{53C07}

\keywords{Flat affine manifold, Hermitian--Einstein connection,
principal bundle, Harder--Narasimhan filtration}

\date{}

\begin{abstract}
Let $M$ be a compact connected special flat affine manifold without
boundary equipped with a Gauduchon metric $g$ and a covariant constant
volume form. Let $G$ be either a connected reductive complex linear
algebraic group or the real locus of a split real form
of a complex reductive group. We
prove that a flat
principal $G$--bundle $E_G$ over $M$ admits a Hermitian--Einstein
structure if and only if $E_G$ is polystable. A polystable flat
principal $G$--bundle over $M$ admits a unique Hermitian--Einstein
connection. We also prove the existence and uniqueness of a
Harder--Narasimhan filtration for flat vector bundles over $M$.
We prove a Bogomolov type inequality for semistable vector
bundles under the assumption that the Gauduchon metric $g$ is
astheno--K\"ahler.
\end{abstract}

\maketitle

\section{Introduction}\label{sec1}

A flat affine manifold $M$ is a $C^\infty$ manifold equipped with a
flat torsionfree connection on $TM$. Equivalently, a flat affine
structure on a manifold is provided by an atlas of coordinate charts
whose transition functions are all affine maps $x\mapsto Ax+b$. The
total space of the tangent bundle $TM$ of a flat affine manifold
admits a complex structure, with the transition maps being $z\mapsto
Az+b$, for $z=x+\sqrt{-1}y$, with $y$ representing the fiber
coordinates. There is a dictionary between holomorphic objects on
$TM$ which are invariant in the fiber directions  and locally
constant objects on $M$ (cf. \cite{Lo}). This correspondence between
affine and complex manifolds has recently become prominent as a part
of the mirror conjecture of Strominger--Yau--Zaslow (in this case,
each fiber of the tangent bundle $TM\,\longrightarrow\,M$ is
quotiented by a lattice to form a special Lagrangian torus in a
Calabi--Yau manifold). In particular, a flat vector bundle over $M$
naturally extends to a holomorphic vector bundle over $TM$.

We briefly recall the set--up and the main result of \cite{Lo}. An
affine manifold $M$ is called special if the induced flat connection
on the line bundle $\bigwedge^{\rm top}TM$ has trivial holonomy. Let
$M$ be a compact connected special flat affine manifold without
boundary. Fix a nonzero flat section $\nu$ of $\bigwedge^{\rm
top}TM$ (under our dictionary, $\nu$ corresponds to a holomorphic
volume form on the total space of  $TM$). Also fix an affine
Gauduchon metric $g$ on $M$. This allows us to define the degree of
a flat vector bundle over $M$ (see Section \ref{sec2} below); we
will consider both real and complex vector bundles. By a flat vector
bundle we will always mean a vector bundle equipped with a flat
connection (and thus locally constant transition functions). For a
flat vector bundle $V\, \longrightarrow\, M$, its degree is denoted
by ${\rm deg}_g(V)$, and its slope ${\rm deg}_g(V)/{\rm rank}(V)$ is
denoted by $\mu_g(V)$.

Once degree is defined, we can define semistable, stable and polystable
flat vector bundles over $M$ by imitating the corresponding definitions
for holomorphic vector bundles over compact Gauduchon manifolds.
Similarly, Hermitian--Einstein metrics on flat vector bundles
over $M$ are defined by imitating the definition of
Hermitian--Einstein metrics on holomorphic vector bundles
over Gauduchon manifolds.

The following theorem is proved in \cite{Lo}, with the main technical
part being to use estimates to prove a version of the
Donaldson--Uhlenbeck--Yau Theorem on existence
of Hermitian--Einstein connections on stable vector bundles.

\begin{theorem}[\cite{Lo}]\label{thm0}
A flat vector bundle $V$ over $M$ admits a Hermitian--Einstein
metric if and only if $V$ is polystable. A polystable flat
vector bundle admits a unique Hermitian--Einstein connection.
\end{theorem}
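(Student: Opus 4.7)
The plan is to adapt the Donaldson--Uhlenbeck--Yau theorem to the affine setting. Under the correspondence recalled in the introduction, a flat vector bundle $V\to M$ lifts to a fiber-invariant holomorphic vector bundle on $TM$, and a Hermitian metric on $V$ corresponds to a fiber-invariant Hermitian metric on this lift. The affine Gauduchon condition on $g$ is designed precisely so that the Chern-type connection attached to the flat structure and $h$ satisfies the integration-by-parts identities needed both to make sense of ${\rm deg}_g(V)$ and to reduce the Hermitian--Einstein condition to the elliptic PDE
\[
\sqrt{-1}\,\Lambda_g F_h \;=\; \lambda\,\mathrm{Id}_V
\]
on $M$. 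Both directions of the theorem then become statements about this PDE.

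For ``Hermitian--Einstein $\Rightarrow$ polystable'' I would follow Kobayashi's argument. Given a flat subbundle $W\subset V$, orthogonal projection onto $W$ is a smooth Hermitian endomorphism of $V$, and a Chern--Weil computation combined with integration by parts against the affine Gauduchon volume form yields $\mu_g(W)\le\mu_g(V)$, with equality precisely when the second fundamental form vanishes, i.e., when $W$ is a flat orthogonal direct summand. Iterating this dichotomy produces the polystable decomposition.

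For the converse I would reduce to the stable case by induction on rank, and then attack ``stable $\Rightarrow$ Hermitian--Einstein'' by the continuity method. Perturb the equation to
\[
\sqrt{-1}\,\Lambda_g F_{h_\varepsilon} \;=\; \varepsilon\,\log h_\varepsilon \,+\, \lambda\,\mathrm{Id}_V,
\]
check openness via ellipticity and coercivity of the linearization for $\varepsilon>0$ and solvability at a large value of $\varepsilon$, and then let $\varepsilon\to 0$. The crux, and the main technical obstacle, is a uniform $C^0$ bound on $\log h_\varepsilon$: with such a bound, elliptic bootstrap and Arzel\`a--Ascoli give a solution at $\varepsilon=0$. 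Without it, the Uhlenbeck--Yau weak-limit argument must be executed in the affine category---normalized endomorphisms converge weakly in $L^2_1$ to a nontrivial Hermitian endomorphism whose spectral projectors cut out a weakly flat destabilizing subbundle, contradicting stability. Verifying the regularity and genuine flatness of this weak limit is where the hard estimates of \cite{Lo} enter.

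Finally, for uniqueness: given two Hermitian--Einstein metrics $h_1, h_2$, the self-adjoint endomorphism $s = h_1^{-1}h_2$ satisfies an equation implying that $\mathrm{tr}(s) + \mathrm{tr}(s^{-1})$ is subharmonic for the Laplacian of $g$, hence constant on compact $M$; the equality case then forces $s$ to be covariant constant, and stability (respectively polystability) forces $s$ to act as a scalar on $V$ (respectively as a scalar on each stable summand), so the induced Chern connection is the same.
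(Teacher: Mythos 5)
Your proposal is correct and follows essentially the same route as the source: the paper itself does not reprove Theorem \ref{thm0} but quotes it from \cite{Lo}, whose proof is exactly the strategy you outline --- the affine Dolbeault/Gauduchon framework, the easy Chern--Weil direction for flat subbundles, the Uhlenbeck--Yau continuity method with the perturbation $\varepsilon\log h_\varepsilon$ and a destabilizing flat subbundle extracted from the weak limit, and uniqueness via the standard maximum-principle argument for $h_1^{-1}h_2$. Nothing in your sketch diverges from that approach in any essential way.
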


Our aim here is to establish a similar result for flat principal
bundles over $M$.

Let $G$ be a connected Lie group such that it is either
a complex reductive linear algebraic group
or it is the fixed point locus of an anti-holomorphic
involution $\sigma_{G_{\mathbb C}}\, :\, G_{\mathbb C}\,
\longrightarrow\,
G_{\mathbb C}$, where $G_{\mathbb C}$ is a complex reductive
linear algebraic group; if
$G$ is of the second type, then we assume that
$\sigma_{G_{\mathbb C}}$ is of split type. Extending the
definition of a flat polystable vector bundle on $M$,
we define polystable flat principal $G$--bundles over $M$. A flat
principal $\text{GL}_r$--bundle is polystable if and only if the
corresponding vector bundle of rank $r$ is polystable.

Fix a maximal compact subgroup $K\, \subset\, G$. A Hermitian
structure on a flat principal $G$--bundle $E_G$ on $M$ is a
$C^\infty$ reduction of structure group $E_K\, \subset\, E_G$
to the subgroup $K$. If $G$ is the real points of
$G_{\mathbb C}$, given a flat
principal $G$--bundle $E_G$ over $M$, we get a holomorphic
principal $G_{\mathbb C}$--bundle $E_{G_{\mathbb C}}$
over the total space of $TM$.
For any Hermitian structure on $E_G$, there is a
naturally associated connection on the principal
$G_{\mathbb C}$--bundle $E_{G_{\mathbb C}}$.

A Hermitian structure on $E_G$ produces a connection on $E_G$.
Contracting using $g$ the curvature form of the connection, we
obtain a section of the adjoint bundle $\text{ad}(E_G)$. A Hermitian
structure on $E_G$ is called Hermitian--Einstein if this section of
$\text{ad}(E_G)$ is given by some element in the center of
$\text{Lie}(G)$. The connection associated to a Hermitian structure
satisfying this condition is called a Hermitian--Einstein
connection.

We prove the following generalization of Theorem \ref{thm0}
(see Theorem \ref{thm-2}):

\begin{theorem}\label{thm1}
A flat principal $G$--bundle $E_G\, \longrightarrow\, M$
admits a Hermitian--Einstein structure if and only if $E_G$
is polystable. A polystable flat principal $G$--bundle admits a
unique Hermitian--Einstein connection.
\end{theorem}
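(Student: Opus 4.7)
The plan is to deduce Theorem \ref{thm1} from the flat vector bundle case, Theorem \ref{thm0}, by means of a faithful representation of $G$, following the standard Ramanathan--Subramanian reduction together with its Gauduchon adaptation by Anchouche--Biswas. Throughout, I would work with flat objects on $M$ directly, invoking the dictionary of \cite{Lo} only when translating complex--analytic constructions to the affine setting.

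For the direction that a polystable flat principal $G$--bundle $E_G$ admits a Hermitian--Einstein structure, I would first fix a faithful representation $\rho\,:\,G\,\longrightarrow\,\text{GL}(V)$ that sends the connected center $Z(G)^\circ$ into the scalars; such a $\rho$ exists because $G$ (or $G_{\mathbb C}$) is reductive. The first substantive step is to show that the associated flat vector bundle $E_G(V)\,:=\,E_G\times^\rho V$ is polystable over $M$. This amounts to translating reductions of $E_G$ to parabolic subgroups, together with their dominant characters, into filtrations of $E_G(V)$ and comparing slopes; the argument is the one used in the K\"ahler principal--bundle literature, modulo replacing $\deg_g$ by its flat affine analog. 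By Theorem \ref{thm0}, the flat vector bundle $E_G(V)$ then carries a Hermitian--Einstein metric $h$. The crucial algebraic step is to show that $h$ actually comes from a $C^\infty$ reduction of structure group $E_K\,\subset\,E_G$ to a maximal compact $K\,\subset\,G$: this uses polystability of $E_G$, uniqueness in Theorem \ref{thm0}, and the facts that $\rho(K)\,\subset\,\text{U}(V)$ and that $G/K\,\hookrightarrow\,\text{GL}(V)/\text{U}(V)$ sits as a totally geodesic symmetric subspace. The Hermitian--Einstein equation for the induced connection on $E_G$ then follows from that on $h$ by projecting curvatures along $d\rho$ and using injectivity of $d\rho$ on $\text{Lie}(G)$.

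For the converse direction, given a Hermitian--Einstein connection on $E_G$, I would prove semistability by a Chern--Weil computation on $M$: for any reduction to a parabolic $P\,\subset\,G$ with dominant character $\chi$, the Hermitian--Einstein equation controls $\deg_g(E_P(\chi))$ and yields the required slope inequality, up to the central contribution coming from the HE constant. Polystability is read off by analyzing the equality case, which forces the reduction to split flatly into a sum of stable factors. Uniqueness of the Hermitian--Einstein connection is inherited from the uniqueness clause of Theorem \ref{thm0} via $\rho$: two $K$--reductions of $E_G$ producing HE connections induce two HE metrics on $E_G(V)$, which must therefore coincide, and this in turn pins down the $K$--reduction.

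I expect the main obstacle to lie in the algebraic step of the forward direction, namely verifying that the Hermitian--Einstein metric furnished by Theorem \ref{thm0} on the associated vector bundle $E_G(V)$ descends to a $K$--reduction of $E_G$. In the K\"ahler category this is due to Ramanathan--Subramanian and depends delicately on polystability of $E_G$; in the present flat affine setting one must additionally verify that the dictionary of \cite{Lo} preserves the representation--theoretic identifications at stake and, when $G$ is the real locus of a split form, that the resulting reduction is compatible with the anti--holomorphic involution $\sigma_{G_{\mathbb C}}$. Once this compatibility is in place, the remaining steps are faithful transcriptions of their complex--geometric counterparts.
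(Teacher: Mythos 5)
Your plan is a genuinely different route from the paper's: you propose the Ramanathan--Subramanian scheme (fix a faithful representation $\rho$, show the associated flat bundle $E_G(V)$ is polystable, apply Theorem \ref{thm0} there, and then descend the resulting Hermitian--Einstein metric to a $K$--reduction of $E_G$), whereas the paper works exclusively with the adjoint representation. As written, your plan has three concrete gaps. First, a faithful representation sending $Z(G)^\circ$ into the scalars does not exist once $\dim Z(G)\,>\,1$ (already for $G\,=\,{\mathbb C}^*\times{\mathbb C}^*$ no injective homomorphism to the scalars of any ${\rm GL}(V)$ exists), so the very first step fails for general reductive $G$; if you relax the central condition, $E_G(V)$ decomposes along central characters into pieces of different slopes and is not polystable in the single--slope sense, and the slope bookkeeping you rely on breaks. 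Second, the assertion that polystability of $E_G$ passes to $E_G(V)$ ``by the argument used in the K\"ahler principal--bundle literature'' is not available here: in the K\"ahler case that statement is proved either by algebro--geometric methods (Ramanan--Ramanathan) which have no counterpart over a flat affine Gauduchon base, or as a corollary of the Hermitian--Einstein correspondence for principal bundles --- exactly the statement being proved, so invoking it is circular. The paper needs such a transfer only for the adjoint representation and proves it by hand: Corollary \ref{cor2} ($E_G$ semistable iff ${\rm ad}(E_G)$ semistable, via the Harder--Narasimhan reduction) and Corollary \ref{cor5} ($E_G$ polystable iff ${\rm ad}(E_G)$ polystable, via the socle reduction), both resting on the tensor--product results of Section \ref{sec3}. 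Third, the step you yourself flag as the main obstacle --- that the Hermitian--Einstein metric on $E_G(V)$ comes from a $K$--reduction of $E_G$ --- is left unproved, and it is the crux; ``totally geodesic embedding plus uniqueness'' is a plausible strategy but all the work is there.

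For comparison, the paper avoids the descent problem altogether. Given $E_G$ polystable, ${\rm ad}(E_G)$ is polystable (Corollary \ref{cor5}) and so carries a unique Hermitian--Einstein connection $\nabla({\rm ad})$ by Theorem \ref{thm0}. The Lie bracket is a flat section of the flat bundle ${\mathcal W}\,=\,({\rm ad}(E_G)\otimes{\rm ad}(E_G))^*\otimes{\rm ad}(E_G)$, which has degree zero (using the invariant form $B$), and flat sections of degree--zero bundles with Hermitian--Einstein connections are parallel (Theorem 3 of \cite{Lo}); hence $\nabla({\rm ad})$ preserves the bracket and therefore induces a connection on the principal $G/Z_G$--bundle $E_G/Z_G$. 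The torus quotient $E_G/[G\, ,G]$ carries its own Hermitian--Einstein connection, and since $G\,\longrightarrow\,(G/Z_G)\times(G/[G\, ,G])$ is an isomorphism on Lie algebras, the two connections reassemble into the Hermitian--Einstein connection on $E_G$, with uniqueness inherited from the two factors; the real split form case is reduced to the complex case by Corollary \ref{cor-n2} and a conjugation--invariance argument. The converse direction is likewise immediate in the paper: a Hermitian--Einstein structure on $E_G$ induces one on ${\rm ad}(E_G)$, so ${\rm ad}(E_G)$ is polystable by \cite{Lo}, and $E_G$ is polystable by Corollary \ref{cor5} --- no Chern--Weil slope estimate or equality--case analysis (another substantive argument your plan would still owe) is needed. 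If you wish to salvage your route, you would essentially have to reprove the extension--of--structure--group results and the metric--descent lemma in the flat affine setting, at which point the adjoint--bundle argument of the paper is both shorter and more robust.
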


We prove the following
analog of the Harder--Narasimhan filtration (see Theorem
\ref{thm-1}):

\begin{theorem}\label{thm2}
For any flat vector bundle $V\,\longrightarrow\, M$, there
is a unique filtration of flat subbundles
$$
0\,=\, F_0\, \subset\, F_1\, \subset\, \cdots
\, \subset\, F_{\ell-1} \, \subset\, F_\ell\, =\, V
$$
such that for each $i\, \in\, [1\, ,\ell]$, the
flat vector bundle $F_i/F_{i-1}$ is semistable, and
$$
\mu_g(F_1) \, >\, \mu_g(F_2/F_1) \, >\, \cdots
\, >\, \mu_g(F_\ell/F_{\ell-1})\, .
$$
\end{theorem}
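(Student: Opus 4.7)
The plan is to mimic the classical Harder--Narasimhan construction, exploiting a simplification available in the flat setting: by parallel transport, every flat subbundle $W\,\subseteq\, V$ of rank $k$ corresponds to a $k$-dimensional subspace of the fiber $V_{x_0}$ at a basepoint $x_0\,\in\, M$ that is invariant under the holonomy representation $\rho\,:\,\pi_1(M)\,\longrightarrow\, \mathrm{GL}(V_{x_0})$. Consequently, the set $\mathcal{S}_k$ of flat subbundles of $V$ of rank $k$ is a closed real-analytic subvariety of $\mathrm{Gr}_k(V_{x_0})$, hence compact, and after fixing a $C^\infty$ Hermitian metric on $V$ the degree $\deg_g$ becomes a continuous function on $\mathcal{S}_k$ (when computed via the associated affine Chern connection of \cite{Lo}). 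Verifying this continuity carefully is the main technical step; once it is in hand, the rest of the argument is formal, and the delicate weak-compactness considerations needed in the holomorphic Gauduchon setting are avoided.

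Set $\mu_{\max}(V)\,:=\,\sup\{\mu_g(W) : 0\,\neq\, W\,\subseteq\, V \text{ a flat subbundle}\}$. Finiteness of $\mu_{\max}(V)$ is immediate since ranks lie in the finite set $\{1,\ldots,\mathrm{rank}(V)\}$ and $\deg_g$ is bounded on each compact $\mathcal{S}_k$; attainment follows from the same compactness. Let $F_1$ be a flat subbundle realizing $\mu_{\max}(V)$ of \emph{maximal rank} among all such subbundles; by construction $F_1$ is semistable.

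To prove uniqueness of $F_1$, suppose $F_1'$ is another such subbundle of the same rank $r$. Both $F_1\cap F_1'$ and $F_1+F_1'$ are flat subbundles of $V$ (they correspond under holonomy to the intersection and sum of the underlying invariant subspaces of $V_{x_0}$). Additivity of $\deg_g$ applied to
$$
0\,\longrightarrow\, F_1\cap F_1'\,\longrightarrow\, F_1\oplus F_1'\,\longrightarrow\,
F_1+F_1'\,\longrightarrow\, 0
$$
gives $\deg_g(F_1\cap F_1')+\deg_g(F_1+F_1')\,=\,2r\cdot\mu_{\max}(V)$. Combined with $\mu_g(F_1\cap F_1')\,\leq\,\mu_{\max}(V)$ and $\mu_g(F_1+F_1')\,\leq\,\mu_{\max}(V)$, both inequalities must be equalities, and the maximal-rank property of $F_1$ then forces $F_1+F_1'\,=\,F_1$, hence $F_1'\,=\,F_1$ by symmetry.

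I then construct the filtration inductively: having built $F_0\,\subset\,\cdots\,\subset\, F_i$ with $F_i\,\neq\, V$, let $F_{i+1}/F_i$ be the maximal destabilizer of $V/F_i$, so $F_{i+1}$ is the flat subbundle of $V$ obtained by pullback. The process terminates with some $F_\ell\,=\,V$ because ranks strictly increase. The slope inequality $\mu_g(F_i/F_{i-1})\,>\,\mu_g(F_{i+1}/F_i)$ follows from the maximal-rank property at step $i$: if $\mu_g(F_{i+1}/F_i)\,\geq\,\mu_g(F_i/F_{i-1})$, then $F_{i+1}/F_{i-1}\,\subset\, V/F_{i-1}$ would have slope at least $\mu_g(F_i/F_{i-1})\,=\,\mu_{\max}(V/F_{i-1})$ and rank strictly greater than $\mathrm{rank}(F_i/F_{i-1})$, contradicting the construction of $F_i$. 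Uniqueness of the full filtration follows by induction from uniqueness of each successive maximal destabilizer.
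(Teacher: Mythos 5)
Your existence argument is essentially the paper's own: flat rank-$k$ subbundles correspond to holonomy-invariant points of $\mathrm{Gr}_k(V_{x_0})$, forming a compact set on which the slope is continuous, so the supremum is attained (this is Lemma \ref{lem2} of the paper, and your shortcut of deducing finiteness of $\mu_{\max}$ directly from this compactness, rather than via a separate bound as in Lemma \ref{lem1}, is legitimate). Your uniqueness argument for the maximal destabilizer, via the exact sequence $0\to F_1\cap F_1'\to F_1\oplus F_1'\to F_1+F_1'\to 0$ and the maximal-rank property, is a correct variant of the paper's Proposition \ref{prop1} (you should state it for a competitor $F_1'$ of arbitrary rank with slope $\mu_{\max}$, not just rank $r$; the same computation shows every maximal-slope subbundle is contained in $F_1$, which you will need below). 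The inductive construction and the strict slope inequalities are also fine.

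The genuine gap is in the last sentence. What you have proved is that the \emph{canonical} filtration built from successive maximal destabilizers is well defined; but the theorem asserts that \emph{any} filtration $0=E_0\subset E_1\subset\cdots\subset E_n=V$ with semistable quotients and strictly decreasing slopes coincides with it, and nothing in your argument shows that the first step $E_1$ of such a competing filtration must be the maximal destabilizer $F_1$. This is exactly where the paper's proof of Theorem \ref{thm-1} does extra work: it uses the observation (from the proof of Lemma \ref{lem1}) that there is no nonzero flat homomorphism from a semistable flat bundle to a semistable flat bundle of strictly smaller slope, applies it to the maps $F_1\to E_i/E_{i-1}$ for $i\ge 2$ to conclude $F_1\subseteq E_1$, and then compares slopes (using semistability of $E_1$ and maximality of $F_1$) to force $F_1=E_1$, finishing by induction on $V/F_1$. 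Your proposal contains neither this hom-vanishing lemma nor any substitute for it (for instance, the containment statement that $E_1$, having slope $\le\mu_{\max}$ and being semistable, must have slope exactly $\mu_{\max}$ and hence lie in $F_1$ also requires an argument of this type), so as written the uniqueness assertion of the theorem is not established. Adding the hom-vanishing step, which is elementary since kernels and images of flat morphisms are flat subbundles, closes the gap.
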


Our proof of Theorem \ref{thm1} crucially uses Theorem \ref{thm2}.

One goal of the present work and of \cite{Lo} is to develop analytic
tools to study affine manifolds. In the complex case, Li--Yau--Zheng
and also Teleman have used Hermitian--Einstein metrics on vector
bundles over non--K\"ahler surfaces equipped with Gauduchon metrics
to partially classify surfaces of Kodaira class VII
\cite{LYZ,Te94,Te05}. There are similar open problems in classifying
affine manifolds in low dimension. As suggested by Bill Goldman,
one case that may be tractable is that of flat affine symplectic
four--manifolds (flat affine four--manifolds admitting a flat
nondegenerate closed two--form). Little is known about these manifolds.

In particular, we hope to use the results of this paper to study
representations of $\pi_1(M)$ into a reductive Lie group $G$. It is
well known that a flat principal $G$--bundle over a manifold $M$ is
equivalent to a conjugacy class of homomorphisms from $\pi_1(M)$ to
$G$. Our Theorem \ref{thm1} thus can be rephrased as follows:
If $M$ is a special affine manifold equipped with a Gauduchon
metric $g$, every representation $\pi_1(M)\longrightarrow G$ admits
either a nontrivial destabilizing subrepresentation or a unique
Hermitian--Einstein connection.

For a general affine manifold $M$
and representation $\pi_1(M)\longrightarrow G$, we expect there to be
few subrepresentations at all, and so the existence of the canonical
Hermitian--Einstein connection is to be expected in many cases.

In Section \ref{sec.b}, we prove the
following Bogomolov type inequality as an
application of Theorem \ref{thm1} (see Lemma \ref{lem6}):

\begin{lemma}
Assume that the Gauduchon metric $g$ satisfies the condition
that $\partial\overline{\partial}(\omega^{d-2}_g)\, =\,0$,
where $\omega$ is the corresponding $(1\, ,1)$-form (it is
called an astheno--K\"ahler metric).
Let $V\, \longrightarrow\, M$ be a semistable flat vector bundle
of rank $r$. Then
$$
\int_M \frac{c_2({\mathcal E}nd(V))\omega^{d-2}_g}{\nu}\, =\,
\int_M \frac{(2r\cdot c_2(V) -
(r-1)c_1(V)^2)\omega^{d-2}_g}{\nu}\, \geq\, 0\, .
$$
\end{lemma}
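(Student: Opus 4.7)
The plan is to follow the strategy of Lübke (and Bando--Siu in the non-Kähler setting), adapted to the flat affine situation via the tangential dictionary, proceeding in two steps: first reduce to the polystable case, then invoke Theorem \ref{thm1} together with a pointwise algebraic inequality for Hermitian--Einstein curvature.

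First I would reduce to the polystable case. Iterating the construction of a maximal polystable flat subbundle of maximal slope inside each semistable piece (this exists by the same arguments that produce Theorem \ref{thm2}) gives a Jordan--Hölder filtration of the semistable $V$ by flat subbundles
$$
0\,=\,V_0\,\subset\,V_1\,\subset\,\cdots\,\subset\,V_k\,=\,V
$$
whose successive quotients $L_i\,=\,V_i/V_{i-1}$ are stable with $\mu_g(L_i)\,=\,\mu_g(V)$. Let $W\,=\,\bigoplus_i L_i$ be the polystable associated graded. The key claim is that
$$
\int_M \frac{c_2(\mathcal{E}nd(V))\,\omega^{d-2}_g}{\nu}\,=\,\int_M \frac{c_2(\mathcal{E}nd(W))\,\omega^{d-2}_g}{\nu}.
$$
To prove this, realize $V$ as a successive extension of the $L_i$, choose an arbitrary Hermitian structure, and observe that the second-Chern contribution of the extension classes is $\partial\overline{\partial}$-exact; the hypothesis $\partial\overline{\partial}(\omega^{d-2}_g)\,=\,0$ then lets one integrate these terms away.

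Second, I would handle the polystable case using Theorem \ref{thm1}, which supplies a Hermitian--Einstein connection on $W$. Passing to the total space of $TM$ via the affine--complex dictionary yields a Chern curvature $F$ on the associated holomorphic bundle satisfying $\Lambda_\omega F\,=\,\lambda\cdot\mathrm{Id}$. Split $F\,=\,F_0+\tfrac{1}{r}\,\mathrm{tr}(F)\cdot\mathrm{Id}$ into trace-free and trace parts; since the Einstein scalar lies entirely in the trace component, one has $\Lambda_\omega F_0\,=\,0$. A pointwise computation (the Kobayashi--Lübke identity) then gives, up to a positive universal constant,
$$
\bigl(2r\cdot c_2(W)-(r-1)c_1(W)^2\bigr)\wedge\omega^{d-2}_g\,=\,\bigl(|F_0|^2_\omega-\tfrac{1}{d(d-1)}|\Lambda_\omega F_0|^2\bigr)\,\omega^d_g\,=\,|F_0|^2_\omega\,\omega^d_g,
$$
which is manifestly nonnegative. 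Dividing by $\nu$ and integrating yields the required inequality for $W$, and then for $V$ by Step 1.

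I expect the main obstacle to be Step 1. In the Kähler case one disposes of the extension terms almost for free, but here the absence of a $\partial\overline{\partial}$-lemma and the fact that the Chern forms naturally live on $TM$ rather than on $M$ mean one must carefully track the fiber-invariant representatives and justify the integrations by parts using the astheno--Kähler condition. The Hermitian--Einstein calculation in Step 2, by contrast, is algebraic and transfers directly from the standard Hermitian case; its only subtlety is ensuring that the identification of the Chern--Weil integrand with a constant multiple of $\omega^d_g/\nu$ is consistent with the affine normalizations used to define $\mathrm{deg}_g$ in Section \ref{sec2}.
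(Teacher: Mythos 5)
Your overall strategy coincides with the paper's: the paper settles the polystable case by taking a Hermitian--Einstein metric $h$ (existence from \cite{Lo}; for vector bundles the natural reference is Theorem \ref{thm0}, i.e.\ \cite[Theorem~1 and Corollary~33]{Lo}, rather than Theorem \ref{thm1}, though the latter is not circular here) and quoting the pointwise L\"ubke/Li--Yau--Zheng computation that $(2r\,c_2(V,h)-(r-1)c_1(V,h)^2)\wedge\omega_g^{d-2}/\nu$ is pointwise nonnegative for an Einstein metric; it then reduces the semistable case to the polystable one via the socle filtration of Corollary \ref{cor4}, whose successive quotients are polystable of slope $\mu_g(V)$. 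Your Step 2 is exactly this polystable step; the constants in your displayed Kobayashi--L\"ubke identity are not quite right, but this is immaterial since, like the paper, you only need the cited computation and the vanishing of $\Lambda_\omega F_0$ under the Einstein condition.

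The genuine weak point is the justification of Step 1, which you yourself flag. The equality $\int_M\Delta(V)\wedge\omega_g^{d-2}/\nu=\int_M\Delta(W)\wedge\omega_g^{d-2}/\nu$ (with $\Delta:=2r\,c_2-(r-1)c_1^2$ and $W$ the associated graded) is true, but ``the extension contributions are $\partial\overline{\partial}$--exact'' is not an off-the-shelf fact: the Bott--Chern $\partial\overline{\partial}$--exactness used just before Lemma \ref{lem6} compares two \emph{metrics} on one and the same flat structure, whereas here you are changing the flat structure from the extension to the direct sum, and plain transgression ($d$--exactness) is not enough because $d(\omega_g^{d-2})\neq0$ in general. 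A clean repair is to scale the extension: write the flat connection of $V$ in a $C^\infty$ splitting as block upper triangular with off-diagonal part $\alpha$, and let $D_t$ have off-diagonal $t\alpha$. For $t\neq0$ the gauge transformation $\mathrm{diag}(\mathrm{Id},t\cdot\mathrm{Id})$ identifies $(V,D_t)$ with $(V,D_1)$, so by metric-independence of the two integrals the quantity is constant in $t\neq0$; for a fixed metric the Chern--Weil forms of the extended Hermitian connection depend polynomially on $t$, so letting $t\to0$ gives the value for the direct sum, and one iterates along the filtration. Incidentally, comparing $V$ with the \emph{whole} polystable bundle $W$, as you do, is the right move: $\Delta$ is not additive (per rank one has $\Delta(V)/r=\sum_i\Delta(L_i)/r_i$ minus a sum of squares of differences of normalized first Chern forms), so deducing the inequality for $V$ by summing the inequalities for the individual quotients --- which is how the paper's last sentence reads if taken literally --- would require an additional Hodge-index type input that your route avoids.
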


A few notes about the proof are in order. As in \cite{Lo}, we are
able to follow the existing proofs in the complex case closely, with
a few important simplifications. In \cite{Lo}, the main
simplification is that we need only consider flat subbundles (as
opposed to singular subsheaves) as destabilizing objects in
non--stable vector bundles. In the current work, we find another such
simplification in the proof of Lemma \ref{lem2}. Flatness implies
that the space of all flat subbundles of rank $k$ of a given flat
vector bundle $V$ is a closed subset of the Grassmannian of the
fiber of $V$ at a given point in $M$, and thus a simple compactness
argument guarantees the existence of a flat subbundle with maximal
slope. The corresponding argument in the complex Gauduchon case is
more complicated \cite{Br}.

\section{Harder--Narasimhan filtration of a vector
bundle}\label{sec2}

We recall from \cite{Lo} some basic definitions. Consider a flat
affine manifold $M$ of dimension $n$ as the zero section of its
tangent bundle $TM$. The unifying idea behind all these definitions
is to consider flat objects on $M$ to be restrictions of holomorphic
objects on $TM$. We may define operators $\partial$ and
$\bar\partial$ on the affine Dolbeault complex, where $(p,q)$ forms
are represented as sections of $\Lambda^p(T^*M)\otimes
\Lambda^q(T^*M)$. A Riemannian metric $g$ on $M$ can be extended to
a natural Hermitian metric on the total space of $TM$. Let
$\omega_g$ be the associated $(1,1)$ form. The metric $g$ is called
\textit{affine Gauduchon} if
$\partial\bar\partial(\omega_g^{n-1})\,=\,0$. Given a flat vector bundle
$V\,\longrightarrow\,M$ equipped with a Hermitian bundle metric $h$,
we may define its first Chern form $c_1(h)$. Assume there is a
covariant constant volume form $\nu$ on $M$. Define the degree
of the vector bundle as
$$\deg_g(V)\,:=\,\int_M \frac{c_1(h)\wedge \omega_g^{n-1}}\nu\, .$$
Then the \textit{slope} of $V$ is defined to be
$$\mu_g(V) \,:=\, \deg_g(V)/{\rm rank}(V)\, .$$
The vector bundle $V$ is said to be
\textit{stable} if every flat subbundle $W$ of $V$
with $0\, <\, \text{rank}(W)\, <\, \text{rank}(V)$ satisfies the
inequality $\mu_g(W)\,<\,\mu_g(V)$. The vector bundle $V$ is called
\textit{semistable} if $\mu_g(W)\,\le\, \mu_g(V)$ for all such
$W$, and $V$ is said to be \textit{polystable} if it is a
direct sum of stable flat bundles of the same slope.

Let
$$
V\, \longrightarrow\, M
$$
be a flat vector bundle; it is allowed to be
real or complex. Fix a filtration of flat subbundles
\begin{equation}\label{e1}
0\,=\, V_0\, \subset\, V_1\, \subset\, \cdots
\, \subset\, V_{n-1} \, \subset\, V_n\, =\, V
\end{equation}
such that all the successive quotients $V_i/V_{i-1}$,
$1\, \leq\, i\, \leq\, n$, are stable.

Define
\begin{equation}\label{e2}
\delta\,:=\, \text{Max}\{\mu_g(V_i/V_{i-1})\}_{i=1}^n
\, \in\, \mathbb R\, .
\end{equation}

\begin{lemma}\label{lem1}
Let $F\, \subset\, V$ be any flat subbundle of $V$ of
positive rank. Then $\mu_g(F)\, \leq\, \delta$.
\end{lemma}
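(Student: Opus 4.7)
The plan is to argue by induction on the length $n$ of the stable filtration \eqref{e1}. When $n=1$, the bundle $V=V_1$ is itself stable with $\mu_g(V)=\delta$, so by the definition of stability every proper flat subbundle $F$ satisfies $\mu_g(F)<\delta$, while $F=V$ gives equality; in either case $\mu_g(F)\leq\delta$.

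For the inductive step, I would consider the two flat subbundles $F\cap V_{n-1}$ and $F+V_{n-1}$ of $V$. Because flat subbundles of a flat bundle correspond to $\pi_1(M)$-invariant subspaces of a single fiber, intersections and sums of flat subbundles are again flat subbundles of constant rank, and one obtains the short exact sequence of flat bundles
\[
0\longrightarrow F\cap V_{n-1}\longrightarrow F\longrightarrow F/(F\cap V_{n-1})\longrightarrow 0,
\]
in which $F/(F\cap V_{n-1})\cong (F+V_{n-1})/V_{n-1}$ is a flat subbundle of the stable quotient $V_n/V_{n-1}$. The inductive hypothesis applied to $V_{n-1}$ with its truncated filtration of length $n-1$ gives $\mu_g(F\cap V_{n-1})\leq\delta$ whenever the intersection has positive rank, and stability of $V_n/V_{n-1}$ gives $\mu_g(F/(F\cap V_{n-1}))\leq\mu_g(V_n/V_{n-1})\leq\delta$ whenever the quotient is nonzero. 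Since degree and rank are both additive in the exact sequence, $\mu_g(F)$ is a convex combination of these two slopes and therefore is also bounded by $\delta$. The degenerate cases $F\subset V_{n-1}$ and $F\cap V_{n-1}=0$ are handled by the relevant single bound.

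The main technical point to verify is that $F\cap V_{n-1}$ and $F+V_{n-1}$ are indeed flat subbundles of constant rank, since this is what legitimizes the additivity of degree in the weighted-average step. As highlighted in the introduction, this is one of the simplifications enjoyed in the flat setting as opposed to the general holomorphic Gauduchon one: flat subbundles are governed by the discrete data of subrepresentations of $\pi_1(M)$, so the constancy of rank is automatic and no saturation or reflexive-hull argument is needed.
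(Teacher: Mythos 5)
Your proof is correct, but it follows a genuinely different route from the paper's. You induct on the length of the stable filtration, using the flat exact sequence $0\to F\cap V_{n-1}\to F\to F/(F\cap V_{n-1})\to 0$, bounding the first term by the inductive hypothesis for the truncated filtration of $V_{n-1}$ and the second by stability of $V_n/V_{n-1}$, and then concluding by the weighted-average formula for slopes; your technical point that intersections, sums and images of flat subbundles are again flat subbundles of constant rank (via monodromy-invariant subspaces of a fixed fiber) is exactly right, and the additivity of $\deg_g$ across such exact sequences that you rely on is the same fact the paper invokes in the proof of Proposition \ref{prop1}. The paper instead first reduces to the case where $F$ is stable, and then observes that a nonzero flat homomorphism between semistable flat bundles forces the slope of the source to be at most that of the target; if $\mu_g(F)>\delta$ this kills all flat homomorphisms from $F$ to each $V_i/V_{i-1}$, hence to $V$, contradicting the inclusion $F\subset V$. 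Your argument is more elementary and self-contained (no reduction to stable $F$, no hom-vanishing lemma), while the paper's route has the side benefit that the hom-vanishing observation it establishes is reused later, notably in the uniqueness part of Theorem \ref{thm-1}, which explicitly refers back to the proof of Lemma \ref{lem1}.
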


\begin{proof}
If $F$ is not semistable, then there is a flat subbundle
$F'\, \subset\, F$ such that
$$
0\, <\, \text{rank}(F')\, < \,\text{rank}(F) ~\,~\text{~and~}~\,~
\mu_g(F') \, > \, \mu_g(F)\, .
$$
Furthermore, if $F$ is semistable, then there is a flat subbundle
$F''\, \subset\, F$ such that $\text{rank}(F'')$
is smallest among the ranks of all flat subbundles $W$ of $F$ with
$\mu_g(W) \, = \, \mu_g(F)$. Note that such a smallest
rank flat vector
bundle $F''$ is automatically stable. Therefore,
it is enough to check the inequality in the lemma under the
assumption that $F$ is stable.

Assume that $F$ is stable, and $\mu_g(F)\, > \, \delta$.

Let $F_1$ and $F_2$ be semistable flat vector bundles over $M$ such that
either both of them are real or both are complex. Let
$$
\varphi\, :\, F_1\, \longrightarrow\, F_2
$$
be a nonzero flat homomorphism of vector bundles. Then
$$
\mu_g(F_1) \, \leq\, \mu_g(\text{Image}(\varphi))
\, \leq\,\mu_g(F_2)
$$
because $F_1$ and $F_2$ are semistable.
Therefore, there is no nonzero flat homomorphism of vector
bundles from $F_1$ to $F_2$ if $\mu_g(F_1)\, >\,
\mu_g(F_2)$.

{}From the above observation we conclude that for each $i\, \in
\, [1\, , n]$, there is
no nonzero flat homomorphism of vector bundles from $F$
to $V_i/V_{i-1}$. This immediately implies that
there is no nonzero flat homomorphism of vector bundles from $F$
to $V$. This contradicts the fact that $F$ is a
flat subbundle of $V$.
Therefore, we conclude that $\mu_g(F)\, \leq \, \delta$.
\end{proof}

Define
\begin{equation}\label{e3}
\delta_0(V)\,:=\, \text{Sup}\,\{\mu_g(F)\,\mid\, F
~\text{~is~a~flat~subbundle~of\,}~V\}
\end{equation}
which is a finite number due to Lemma \ref{lem1}.

\begin{lemma}\label{lem2}
There is a flat subbundle $F\, \subset\,V$ such that
$\mu_g(F)\,=\, \delta_0(V)$, where $\delta_0(V)$ is
defined in \eqref{e3}.
\end{lemma}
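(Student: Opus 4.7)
My plan is a direct compactness argument, as hinted at in Section~\ref{sec1}. I would first choose a sequence $\{F_n\}$ of flat subbundles of $V$ with $\mu_g(F_n)\to\delta_0(V)$. Since the ranks $\text{rank}(F_n)\in\{1,\ldots,\text{rank}(V)\}$ take only finitely many values, one may pass to a subsequence on which $\text{rank}(F_n)=k$ for a fixed integer $k$.

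Second, I would identify rank-$k$ flat subbundles of $V$ with a closed subset of a Grassmannian. Fix a basepoint $x_0\in M$ and let $\rho\colon\pi_1(M,x_0)\to\text{GL}(V_{x_0})$ be the holonomy of the flat connection on $V$. The map $F\mapsto F_{x_0}$ is a bijection between rank-$k$ flat subbundles of $V$ and $\rho$-invariant $k$-dimensional subspaces of $V_{x_0}$, the inverse being given by parallel transport (well defined independently of the chosen path because $W$ is $\rho$-invariant). Since $M$ is compact, $\pi_1(M,x_0)$ is finitely generated, so the set of $\rho$-invariant subspaces is cut out by finitely many closed conditions inside the compact Grassmannian $\text{Gr}(k,V_{x_0})$ and is itself compact. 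Extracting a further subsequence, $(F_n)_{x_0}$ converges to some $\rho$-invariant subspace $W_\infty$, which by parallel transport yields a flat subbundle $F_\infty\subset V$.

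The key remaining step, which I expect to be the main obstacle, is to show that the degree $\deg_g(F_W)$ depends continuously on $W$ within this compact set. Fix any auxiliary Hermitian metric $h$ on $V$, and let $P_W\in C^\infty(M,\text{End}(V))$ denote the $h$-orthogonal projection onto $F_W$. Since the fiberwise projection at $x_0$ is a smooth function of $W\in\text{Gr}(k,V_{x_0})$ and parallel transport is smooth, the map $W\mapsto P_W$ from the Grassmannian into $C^\infty(M,\text{End}(V))$ is itself smooth. The standard formula for the Chern connection of a subbundle expresses its curvature locally in terms of $P_W$, the curvature of $h$ on $V$, and derivatives of $P_W$; thus $c_1(h|_{F_W})$ depends continuously on $W$ in the $C^0$ topology. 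Integrating against $\omega_g^{n-1}/\nu$ then gives continuity of $\deg_g(F_W)$, and, the rank being fixed, of $\mu_g(F_W)$ as well.

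Combining the three steps, $\mu_g(F_\infty)=\lim_n\mu_g(F_n)=\delta_0(V)$, so $F_\infty$ is the desired flat subbundle. The rank reduction and the Grassmannian compactness are formal; the essential technical point is the continuity of the degree, and this is precisely where the flatness hypothesis on $V$ pays off, since the complex Gauduchon analog would force one to consider singular torsion-free subsheaves, whose moduli is considerably less tractable \cite{Br}.
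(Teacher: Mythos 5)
Your argument is essentially the paper's own proof: fix the rank $k$, identify rank-$k$ flat subbundles of $V$ with the compact set of monodromy-invariant $k$-planes in $\text{Gr}(V_{x_0},k)$ (the fixed-point locus of $\rho(\pi_1(M,x_0))$, which is closed), and maximize the continuous slope function on this compact set. The only differences are cosmetic --- the paper selects an integer $k$ realizing the supremum rather than extracting a constant-rank subsequence, and it merely asserts the continuity of $z\mapsto \mu_g(F^z)$ that you justify via the smooth dependence of the projection $P_W$ --- so the proposal is correct and follows the same route.
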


\begin{proof}
Fix an integer $k \, \in\, [1\, ,\text{rank}(V)]$ such that
$$
\delta_0(V)\,=\, \text{Sup}\,\{\mu_g(F)\,\mid\, F\, \subset\, V
~\text{~is~a~flat~subbundle~of~rank\,}~k\}\, ;
$$
such a $k$ clearly exists because $\text{rank}(V)$
is a finite integer. Fix a point
$$
x_0\, \in\, M\, .
$$
Let $\text{Gr}(V_{x_0},k)$ be the Grassmannian parametrizing
all linear subspaces of dimension $k$ of the fiber $V_{x_0}$.

For any flat subbundle $F\, \subset\, V$ of rank $k$, consider the
subspace $F_{x_0}\, \in\, \text{Gr}(V_{x_0},k)$. We note that the
flat subbundle $F$ is uniquely determined by the point $F_{x_0}\,
\in\, \text{Gr}(V_{x_0},k)$, because $M$ is connected. Let
\begin{equation}\label{e4}
S\, \subset\, \text{Gr}(V_{x_0},k)
\end{equation}
be the locus of all subspaces that are fibers of flat subbundles
of $V$ of rank $k$. We will now describe $S$ explicitly.

Let $$\rho\, :\, \pi_1(M, x_0)\, \longrightarrow\, \text{GL}(V_{x_0})$$
be the monodromy representation for the flat connection on $V$. The
group $\text{GL}(V_{x_0})$ acts on $\text{Gr}(V_{x_0},k)$ in a natural
way. The subset $S$ in \eqref{e4} is the fixed point locus
$$
S\, =\, \text{Gr}(V_{x_0},k)^{\rho(\pi_1(M, x_0))}
$$
for the action of $\rho(\pi_1(M, x_0))$ on $\text{Gr}(V_{x_0},k)$.
Note that
$$
S\,=\, \text{Gr}(V_{x_0},k)^{\overline{\rho(\pi_1(M, x_0))}}\, .
$$
This implies that $S$ is a closed subset of $\text{Gr}(V_{x_0},k)$.
In particular, $S$ is compact.

For each point $z\, \in\, S$, let $F^z\, \subset\, V$ be the unique
flat subbundle of $V$ such that $(F^z)_{x_0} \,=\, z$. We have a
continuous function
$$
f^k_V \, :\, S\, \longrightarrow\, \mathbb R
$$
defined by $z\, \longmapsto\, \mu_g(F^z)$. Since $S$ is compact,
there is a point $z_0\, \in\, S$ at which the function $f^k_V$
takes the maximum value $\delta_0(V)$. The corresponding flat
subbundle $F^{z_0}\, =\, F$ satisfies the condition in the lemma.
\end{proof}

\begin{proposition}\label{prop1}
There is a unique maximal flat subbundle $F\, \subset\, V$
with $\mu_g(F)\, =\, \delta_0(V)$, where $\delta_0(V)$ is
defined in \eqref{e3}.
\end{proposition}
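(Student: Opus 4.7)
The plan is to show that the collection of flat subbundles of $V$ achieving slope $\delta_0(V)$ is closed under fiberwise sums; combined with the rank bound $\text{rank}(W)\leq\text{rank}(V)$, this immediately yields a unique maximal element.

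First, I would verify that if $F_1,F_2\subset V$ are two flat subbundles, then both the fiberwise sum $F_1+F_2$ and the fiberwise intersection $F_1\cap F_2$ are again flat subbundles of $V$. The key point is that parallel transport for the flat connection on $V$ preserves $F_1$ and $F_2$ individually, and hence preserves both their sum and their intersection in every fiber. Since parallel transport is a linear isomorphism and $M$ is connected, the dimensions of these subspaces are constant on $M$; this gives the subbundle property, and flatness of the restricted connection is automatic. This is exactly the flat-setting simplification noted in the introduction (in the complex Gauduchon case one would be forced to work with torsion-free subsheaves).

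Next, from the short exact sequence of flat vector bundles
$$
0 \,\longrightarrow\, F_1\cap F_2 \,\longrightarrow\, F_1\oplus F_2 \,\longrightarrow\, F_1+F_2 \,\longrightarrow\, 0\, ,
$$
additivity of rank and of $\deg_g$ (applied to flat vector bundles in exact sequences) gives
$$
\text{rank}(F_1)+\text{rank}(F_2) \,=\, \text{rank}(F_1\cap F_2)+\text{rank}(F_1+F_2)\, ,
$$
$$
\deg_g(F_1)+\deg_g(F_2) \,=\, \deg_g(F_1\cap F_2)+\deg_g(F_1+F_2)\, .
$$
Assuming $\mu_g(F_1)=\mu_g(F_2)=\delta_0(V)$, the defining inequalities $\mu_g(F_1+F_2)\leq\delta_0(V)$ and $\mu_g(F_1\cap F_2)\leq\delta_0(V)$ (the latter being vacuous when $F_1\cap F_2$ has rank zero) combine with the two additivity relations above to force both slope inequalities to be equalities; in particular $\mu_g(F_1+F_2)=\delta_0(V)$.

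Finally, by Lemma \ref{lem2} there exists a flat subbundle attaining $\delta_0(V)$, and since all ranks are bounded by $\text{rank}(V)$, I may choose such a subbundle $F$ of maximal rank. For any other flat subbundle $F'\subset V$ with $\mu_g(F')=\delta_0(V)$, the sum $F+F'$ is again a flat subbundle of slope $\delta_0(V)$ by the previous step, so maximality of $\text{rank}(F)$ forces $\text{rank}(F+F')=\text{rank}(F)$, that is, $F'\subset F$. This establishes both maximality and uniqueness. In my view the only technical point is ensuring that $F_1+F_2$ is an honest flat subbundle rather than merely a subsheaf, but as noted this is essentially free in the flat setting.
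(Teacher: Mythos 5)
Your proposal is correct and follows essentially the same route as the paper: the same short exact sequence $0\to F_1\cap F_2\to F_1\oplus F_2\to F_1+F_2\to 0$ together with additivity of rank and $\deg_g$ and the defining bound $\mu_g(\cdot)\le\delta_0(V)$ forces $\mu_g(F_1+F_2)=\delta_0(V)$. Your final step (picking a slope-$\delta_0(V)$ subbundle of maximal rank and absorbing any other one via the sum) is just a tidier packaging of the paper's "subbundle generated by all slope-$\delta_0(V)$ subbundles," so no substantive difference.
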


\begin{proof}
Let $F_1$ and $F_2$ be two flat subbundles of $V$ such that
\begin{equation}\label{e4a}
\mu_g(F_1)\, =\, \mu_g(F_2)\,=\, \delta_0(V)\, .
\end{equation}
Note that $F_1$ and $F_2$ are automatically semistable. Let
$$
F_1+F_2\, \subset\, V
$$
be the flat subbundle generated by $F_1$ and $F_2$. We
have a short exact sequence of flat vector bundles
\begin{equation}\label{e5}
0\,\longrightarrow\, F_1\cap F_2\,\longrightarrow\,F_1\oplus
F_2\,\longrightarrow\,F_1+F_2\,\longrightarrow\, 0\, .
\end{equation}
Since $F_1$ and $F_2$ are both semistable, from \eqref{e4a} it
follows immediately that $F_1\oplus F_2$ is also semistable with
\begin{equation}\label{e6}
\mu_g(F_1\oplus F_2)\,=\, \delta_0(V)\, .
\end{equation}

We will show that $F_1+F_2$ is semistable with
$\mu_g(F_1 + F_2)\,=\, \delta_0(V)$.

To prove this, first note
that if $F_1\cap F_2\,=\, 0$, then $F_1\oplus F_2\,=\, F_1+ F_2$,
hence it is equivalent to the above observation. So assume that
$\text{rank}(F_1\cap F_2)\, >\, 0$.

{}From \eqref{e5},
$$
{\rm deg}_g(F_1\oplus F_2)\,=\, {\rm deg}_g(F_1\cap F_2)
+{\rm deg}_g(F_1 +F_2)\, .
$$
Hence
\begin{equation}\label{e6a}
\mu_g(F_1\oplus F_2)\,=\, \frac{\mu_g(F_1\cap F_2)\cdot
\text{rank}(F_1\cap F_2) + \mu_g(F_1 +F_2)\cdot
\text{rank}(F_1+ F_2)}{\text{rank}(F_1\cap F_2)+
\text{rank}(F_1+ F_2)}\, .
\end{equation}
Since $F_1\cap F_2$ and $F_1+ F_2$ are
flat subbundles of $V$, we have
\begin{equation}\label{e7}
\mu_g(F_1\cap F_2)\, , \mu_g(F_1+ F_2)
\, \leq\, \delta_0(V)\, .
\end{equation}
Using \eqref{e6} and \eqref{e6a} and \eqref{e7} we conclude that
$$
\mu_g(F_1+ F_2)\,=\, \mu_g(F_1\cap F_2)\, =\, \delta_0(V)\, .
$$

Therefore, we have proved that $F_1+F_2$ is semistable with
$\mu_g(F_1 + F_2)\,=\, \delta_0(V)$.

Consider the flat subbundle $F\, \subset\, V$ generated by all
flat subbundles $W$ with $\mu_g(W)\,=\, \delta_0(V)$.
Since $F_1+F_2$ is semistable with $\mu_g(F_1 + F_2)\,
=\, \delta_0(V)$ whenever $F_1$ and $F_2$ are
semistable with slope $\delta_0(V)$, if follows immediately
that the flat subbundle $F$ satisfies
the condition in the proposition.
\end{proof}

The unique maximal flat semistable subbundle $F\, \subset\, V$
with $\mu_g(F)\, =\, \delta_0(V)$ in Proposition \ref{prop1}
will be called the \textit{maximal semistable subbundle} of $V$.

Proposition \ref{prop1} has the following corollary:

\begin{corollary}\label{cor1}
There is a unique filtration of flat subbundles
$$
0\,=\, F_0\, \subset\, F_1\, \subset\, \cdots
\, \subset\, F_{\ell-1} \, \subset\, F_\ell\, =\, V
$$
such that for each $i\, \in\, [1\, ,\ell]$, the
flat subbundle $F_i/F_{i-1}\, \subset\, V/F_{i-1}$
is the unique maximal semistable subbundle.
\end{corollary}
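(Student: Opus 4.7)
The plan is to iterate Proposition \ref{prop1} on successive quotients. For existence, set $F_0 \,=\, 0$ and define $F_1 \,\subset\, V$ to be the maximal semistable flat subbundle provided by Proposition \ref{prop1}. Given that $F_i \,\subsetneq\, V$ has been constructed, the quotient $V/F_i$ inherits a flat connection from $V$ (since $F_i$ is flat), so it is again a flat vector bundle on $M$. Apply Proposition \ref{prop1} to $V/F_i$ to obtain its maximal semistable flat subbundle $Q_{i+1} \,\subset\, V/F_i$, and let $F_{i+1}$ be the preimage of $Q_{i+1}$ under the quotient map $V \,\longrightarrow\, V/F_i$. Since the preimage of a flat subbundle under a flat surjection is flat, $F_{i+1}$ is a flat subbundle of $V$, and by construction $F_{i+1}/F_i \,=\, Q_{i+1}$ is the unique maximal semistable subbundle of $V/F_i$. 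Because $\text{rank}(Q_{i+1})\, >\, 0$ whenever $V/F_i\,\neq\, 0$ (Proposition \ref{prop1} produces a subbundle of positive rank), the ranks of $F_i$ strictly increase, so the process terminates at some $F_\ell \,=\, V$.

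For uniqueness, suppose $\{F'_i\}$ is another such filtration. By the uniqueness clause of Proposition \ref{prop1} applied to $V$, we must have $F'_1 \,=\, F_1$. Assuming inductively that $F'_j \,=\, F_j$ for all $j \, \leq\, i$, the quotient $V/F'_i \,=\, V/F_i$ is identified canonically, and the uniqueness of the maximal semistable subbundle of $V/F_i$ forces $F'_{i+1}/F_i \,=\, F_{i+1}/F_i$, hence $F'_{i+1} \,=\, F_{i+1}$. Matching the lengths of the two filtrations by comparing ranks finishes the uniqueness argument.

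The only nonroutine point is verifying that the quotient $V/F_i$ is a genuine flat vector bundle (so that Proposition \ref{prop1} applies to it) and that preimages of flat subbundles are flat. Both follow immediately from the fact that the flat connection on $V$ descends to any quotient by a flat subbundle, and that the pullback of a flat connection along an inclusion of vector bundles is still flat. Thus no new analytic input is needed beyond Proposition \ref{prop1}; the corollary is a purely formal consequence obtained by induction on $\text{rank}(V)$.
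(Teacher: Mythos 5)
Your proof is correct and is exactly the argument the paper intends: the corollary is stated as an immediate consequence of Proposition \ref{prop1}, obtained by iterating the unique maximal semistable subbundle construction on the successive flat quotients $V/F_i$ and using its uniqueness inductively. The points you flag (the flat structure descending to $V/F_i$, preimages of flat subbundles being flat, positivity of the rank forcing termination) are the right routine checks and are handled correctly.
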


The filtration in Corollary \ref{cor1} can be reformulated as
follows:

\begin{theorem}\label{thm-1}
Let $V$ be a flat vector bundle over $M$. Then there is a
unique filtration of $V$ by flat subbundles
$$
0\,=\, F_0\, \subsetneq\, F_1\, \subsetneq\, \cdots\,
\subsetneq\, F_j\, \subsetneq\, F_{j+1}\, \subsetneq\, \cdots
\, \subsetneq\, F_{\ell-1} \, \subsetneq\, F_\ell\, =\, V
$$
such that for each $i\, \in\, [1\, ,\ell]$, the
flat vector bundle $F_i/F_{i-1}$ is semistable, and
$$
\mu_g(F_1) \, >\, \mu_g(F_2/F_1) \, >\, \cdots
\, >\, \mu_g(F_{j+1}/F_j)\, >\, \cdots
\, >\, \mu_g(F_\ell/F_{\ell-1})\, .
$$
\end{theorem}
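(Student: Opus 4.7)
The plan is to take as $\{F_i\}$ the filtration from Corollary \ref{cor1}, in which $F_i/F_{i-1}$ is the unique maximal semistable flat subbundle of $V/F_{i-1}$ and therefore has slope $\delta_0(V/F_{i-1})$; with existence thus in hand, the task reduces to verifying the strict slope inequalities and then extracting uniqueness by an induction on $\text{rank}(V)$.

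For the strict inequalities, I would fix $i\,\in\,[1\, ,\ell-1]$ and argue by contradiction. Suppose $\mu_g(F_{i+1}/F_i)\,\geq\, \mu_g(F_i/F_{i-1})\,=\, \delta_0(V/F_{i-1})$. The short exact sequence of flat vector bundles
$$
0\,\longrightarrow\, F_i/F_{i-1}\,\longrightarrow\, F_{i+1}/F_{i-1}\,\longrightarrow\, F_{i+1}/F_i\,\longrightarrow\, 0
$$
expresses $\mu_g(F_{i+1}/F_{i-1})$ as a weighted average of the two outer slopes, forcing $\mu_g(F_{i+1}/F_{i-1})\,\geq\, \delta_0(V/F_{i-1})$. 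Since $F_{i+1}/F_{i-1}$ is itself a flat subbundle of $V/F_{i-1}$, the definition of $\delta_0$ (equation \eqref{e3}) gives the reverse inequality, so equality holds throughout. But then every flat subbundle of $F_{i+1}/F_{i-1}$ is also a flat subbundle of $V/F_{i-1}$ and hence has slope at most $\delta_0(V/F_{i-1})\,=\, \mu_g(F_{i+1}/F_{i-1})$, so $F_{i+1}/F_{i-1}$ is semistable. This strictly larger semistable subbundle of $V/F_{i-1}$ at slope $\delta_0(V/F_{i-1})$ contradicts the maximality of $F_i/F_{i-1}$ from Proposition \ref{prop1}.

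For uniqueness, let $\{G_j\}_{j=0}^{m}$ be any other filtration satisfying the conditions of Theorem \ref{thm-1}. I would show $G_1\,=\, F_1$ and then iterate on $V/F_1$. Let $j$ be the smallest index with $F_1\,\subset\, G_j$. If $j\,\geq\, 2$, the composition $F_1\,\hookrightarrow\, V\,\twoheadrightarrow\, V/G_{j-1}$ is nonzero and factors through $G_j/G_{j-1}$; applying the semistable slope comparison established inside the proof of Lemma \ref{lem1} to this nonzero flat morphism between semistable bundles yields $\mu_g(F_1)\,\leq\, \mu_g(G_j/G_{j-1})$. Combined with the strict slope drop $\mu_g(G_j/G_{j-1})\,<\, \mu_g(G_1)$ and with $\mu_g(G_1)\,\leq\, \delta_0(V)\,=\, \mu_g(F_1)$, this is a contradiction. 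Hence $F_1\,\subset\, G_1$; semistability of $G_1$ then gives $\mu_g(G_1)\,\leq\, \mu_g(F_1)\,\leq\, \mu_g(G_1)$, so $\mu_g(G_1)\,=\,\delta_0(V)$, and the uniqueness clause of Proposition \ref{prop1} forces $G_1\,\subset\, F_1$, i.e.\ $G_1\,=\, F_1$. The filtrations $\{F_i/F_1\}_{i\geq 1}$ and $\{G_j/G_1\}_{j\geq 1}$ of $V/F_1$ again satisfy the hypotheses of Theorem \ref{thm-1}, and the induction closes.

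The substantive step --- and the only one requiring any care --- is the promotion of the equality $\mu_g(F_{i+1}/F_{i-1})\,=\,\delta_0(V/F_{i-1})$ to actual semistability of $F_{i+1}/F_{i-1}$; this is immediate because every flat subbundle of $F_{i+1}/F_{i-1}$ is also a flat subbundle of $V/F_{i-1}$ and hence controlled by $\delta_0(V/F_{i-1})$. Everything else is bookkeeping about slopes in short exact sequences and the $\text{Hom}$-vanishing between semistable bundles already packaged inside Lemma \ref{lem1}.
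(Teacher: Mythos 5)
Your proposal is correct and follows essentially the same route as the paper: take the filtration of Corollary \ref{cor1}, and obtain uniqueness from the vanishing of nonzero flat homomorphisms between semistable bundles of decreasing slope (as in the proof of Lemma \ref{lem1}) combined with the maximality and uniqueness of $F_1$ from Proposition \ref{prop1}. Your explicit check of the strict slope inequalities and your minimal-index argument for $F_1\,\subset\, G_1$ just fill in steps the paper treats as immediate or phrases as a downward induction on the quotient filtration.
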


\begin{proof}
The filtration in Corollary \ref{cor1} clearly has the
property that for each $i\, \in\, [1\, ,\ell]$, the
flat subbundle $F_i/F_{i-1}$ is semistable, and
$$
\mu_g(F_1) \, >\, \mu_g(F_2/F_1) \, >\, \cdots
\, >\, \mu_g(F_\ell/F_{\ell-1})\, .
$$

Now, let
\begin{equation}\label{g1}
0\,=\, E_0\, \subset\, E_1\, \subset\, \cdots
\, \subset\, E_{n-1} \, \subset\, E_n\, =\, V
\end{equation}
be another filtration of flat subbundles of $V$
such that for each $i\, \in\, [1\, ,n]$, the
flat subbundle $E_i/E_{i-1}$ is semistable, and
$$
\mu_g(E_1) \, >\, \mu_g(E_2/E_1) \, >\, \cdots
\, >\, \mu_g(E_n/E_{n-1})\, .
$$
To show that the filtration in Corollary \ref{cor1}
coincides with the filtration in \eqref{g1}, it suffices
to prove that $E_1\,=\, F_1$, because we may replace
$V$ by $V/F_i$ and use induction on $i$.

If $n\, =\, 1$, then $V$ is semistable. Hence $F_1\, =\, V$,
and the theorem is evident.

Hence assume that $n\, \geq \, 2$.

We have
$$
\mu_g(E_n/E_{n-1}) \, <\, \mu_g(E_1)\, \leq\, \mu_g(F_1)
$$
because $F_1$ is the
maximal semistable subbundle of $V$. Therefore,
case there is no nonzero flat homomorphism from
$F_1$ to $E_n/E_{n-1}$ (see the proof of Lemma
\ref{lem1}). Now, by induction, there is no nonzero flat
homomorphism from $F_1$ to $E_i/E_{i-1}$ for all
$i\, \geq\, 2$. Hence there is no nonzero flat
homomorphism from $F_1$ to $V/E_1$.
Consequently, $F_1$ is a subbundle of $E_1$.

We have $\mu_g(E_1)\, \geq\, \mu_g(F_1)$ because
$E_1$ is semistable and $F_1$ is a subbundle of $E_1$.
On the other hand,
we have $\mu_g(F_1)\, \geq\, \mu_g(E_1)$
because $F_1$ is the
maximal semistable subbundle of $V$. Therefore,
$\mu_g(E_1)\, =\, \mu_g(F_1)$. Again from the
fact that $F_1$ is the maximal semistable
subbundle of $V$ we conclude that the subbundle
$F_1\, \subset\, E_1$ must coincide with $E_1$.
\end{proof}

\section{Semistability of tensor product}\label{sec3}

A flat vector bundle $(V\, ,D)$ over $M$ will be
called \textit{polystable} if
$$
(V\, ,D)\,=\, (\bigoplus_{i=1}^m W_i\, , \bigoplus_{i=1}^m D_i)\, ,
$$
where $(W_i\, ,D_i)$, $1\, \leq\, i\, \leq\, m$, are flat
stable vector bundles, and
$$
\mu_g(W_1)\,=\, \cdots\, =\, \mu_g(W_m)\, .
$$

Let $V_1$ and $V_2$ be flat vector bundles over $M$ such that
either both are real or both are complex.

\begin{lemma}\label{lem3}
If $V_1$ and $V_2$ are stable, then the flat vector bundle
$V_1\otimes V_2$ is polystable.
\end{lemma}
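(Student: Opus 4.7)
The plan is to use Theorem \ref{thm0} (the affine Donaldson--Uhlenbeck--Yau theorem of \cite{Lo}) in both directions: first to produce Hermitian--Einstein metrics on the stable factors $V_1$ and $V_2$, and then, after checking that the induced tensor product metric on $V_1\otimes V_2$ is again Hermitian--Einstein, to read off polystability of $V_1\otimes V_2$ from the converse direction of the same theorem.

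First I would apply Theorem \ref{thm0} to each $V_i$: since $V_i$ is stable it is in particular polystable, so there exists a Hermitian--Einstein metric $h_i$ on $V_i$ whose associated connection $\nabla_i$ satisfies
\[
\Lambda_g F(\nabla_i) \,=\, c_i \cdot \text{Id}_{V_i}
\]
for some constant $c_i$. Now equip $V_1\otimes V_2$ (with its natural tensor product flat structure) with the metric $h \,:=\, h_1\otimes h_2$. The associated connection is the tensor product connection
\[
\nabla \,=\, \nabla_1\otimes \text{Id}_{V_2} + \text{Id}_{V_1}\otimes \nabla_2,
\]
whose curvature is $F(\nabla) \,=\, F(\nabla_1)\otimes \text{Id}_{V_2} + \text{Id}_{V_1}\otimes F(\nabla_2)$. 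Contracting with $g$ yields
\[
\Lambda_g F(\nabla) \,=\, (c_1+c_2)\cdot \text{Id}_{V_1\otimes V_2},
\]
so $h$ is Hermitian--Einstein on the flat vector bundle $V_1\otimes V_2$. Applying the ``admits a Hermitian--Einstein metric $\Longrightarrow$ polystable'' direction of Theorem \ref{thm0} then gives the desired conclusion.

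The proof encounters no serious obstacle; essentially all of the analytic work is already in \cite{Lo}. The two small items that must be verified are that, in the affine-Dolbeault / $TM$-holomorphic framework of Section \ref{sec2}, the Chern connection of a tensor product Hermitian metric really is the tensor product of the individual Chern connections, and that $\Lambda_g$ is additive on the resulting curvature sum. Both are formal and descend from the corresponding well-known identities on compact Gauduchon complex manifolds via the holomorphic extension of flat bundles over $M$ to the total space of $TM$. In the real case one may run the same argument on the complexifications $V_i\otimes_{\mathbb R}\mathbb C$ and then descend using the real structure on $V_1\otimes V_2$.
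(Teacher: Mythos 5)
Your proposal is correct and follows essentially the same route as the paper: take affine Hermitian--Einstein metrics $h_1$, $h_2$ on the stable bundles $V_1$, $V_2$ (existence from \cite{Lo}), observe that the induced metric $h_1\otimes h_2$ on $V_1\otimes V_2$ is again Hermitian--Einstein, and conclude polystability from the converse direction of the affine Donaldson--Uhlenbeck--Yau theorem (\cite[p.~110, Theorem 4]{Lo}). The only cosmetic difference is that the paper invokes the real case of \cite{Lo} directly rather than passing through complexification, but both are fine.
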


\begin{proof}
Assume that $V_1$ and $V_2$ are stable. Then each one them
admits an affine Hermitian--Einstein metric
(see \cite[p. 102, Theorem 1]{Lo} for the complex case and
\cite[p. 129, Corollary 33]{Lo} for the real case). Let $h_1$
and $h_2$ be affine Hermitian--Einstein metrics on
$V_1$ and $V_2$ respectively. The Hermitian metric on
$V_1\otimes V_2$ induced by $h_1$ and $h_2$ is clearly an
affine Hermitian--Einstein one. Therefore, $V_1\otimes V_2$ is
polystable \cite[p. 110, Theorem 4]{Lo}.
\end{proof}

\begin{corollary}\label{co2}
If $V_1$ and $V_2$ are polystable, then the flat vector bundle
$V_1\otimes V_2$ is also polystable.
\end{corollary}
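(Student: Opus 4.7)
The plan is to reduce Corollary \ref{co2} directly to Lemma \ref{lem3} by decomposing both polystable bundles as direct sums of stable flat summands and distributing the tensor product. Writing
$$
V_1\,=\,\bigoplus_{i=1}^{m_1} W_{1,i} \quad\text{and}\quad V_2\,=\,\bigoplus_{j=1}^{m_2} W_{2,j}
$$
with each $W_{k,i}$ flat stable and $\mu_g(W_{1,i})\,=\,\mu_g(V_1)$ for all $i$ and $\mu_g(W_{2,j})\,=\,\mu_g(V_2)$ for all $j$, one obtains
$$
V_1\otimes V_2\,=\,\bigoplus_{i,j} W_{1,i}\otimes W_{2,j}\, .
$$
By Lemma \ref{lem3}, every summand $W_{1,i}\otimes W_{2,j}$ is polystable, so it further decomposes as a direct sum of stable flat subbundles.

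The only thing to verify is that all the stable summands that appear in this double decomposition share a common slope. For this I would use the additivity of slope under tensor product, which at the level of the first Chern form of a Hermitian metric $h_1\otimes h_2$ reads
$$
c_1(h_1\otimes h_2)\,=\,\text{rank}(V_2)\cdot c_1(h_1)+\text{rank}(V_1)\cdot c_1(h_2)\, ,
$$
and therefore $\mu_g(A\otimes B)\,=\,\mu_g(A)+\mu_g(B)$ for any flat vector bundles $A,B$ on $M$. Applied to each pair $(W_{1,i}\, ,W_{2,j})$, every summand in the stable decomposition of $W_{1,i}\otimes W_{2,j}$ has slope $\mu_g(V_1)+\mu_g(V_2)$, independent of $i$ and $j$.

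Collecting all the stable pieces then presents $V_1\otimes V_2$ as a direct sum of stable flat bundles of the common slope $\mu_g(V_1)+\mu_g(V_2)$, which is the definition of polystability recalled at the beginning of Section \ref{sec3}. I do not expect any serious obstacle: the only substantive input is Lemma \ref{lem3}, and the rest is the formal decomposition together with the standard slope-additivity identity, which in the affine Gauduchon setting follows at once from the definition of $\deg_g$ via the integral of $c_1(h)\wedge\omega_g^{n-1}/\nu$.
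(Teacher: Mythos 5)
Your argument is correct and coincides with the paper's proof: the paper likewise deduces Corollary \ref{co2} from Lemma \ref{lem3} by decomposing $V_1$ and $V_2$ into direct sums of flat stable bundles and applying the lemma to each tensor summand. The slope bookkeeping via $\mu_g(A\otimes B)=\mu_g(A)+\mu_g(B)$, which the paper leaves implicit, is filled in correctly.
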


\begin{proof}
This follows from Lemma \ref{lem3} after writing the flat
polystable vector bundles $V_1$ and $V_2$ as direct sums
of flat stable vector bundles.
\end{proof}

\begin{proposition}\label{prop2}
If $V_1$ and $V_2$ are semistable, then the flat vector bundle
$V_1\otimes V_2$ is semistable.
\end{proposition}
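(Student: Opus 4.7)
The plan is to reduce the semistable case to the polystable one (already handled by Corollary~\ref{co2}) via a Jordan--H\"older style filtration, and then to conclude by an extension argument for semistable bundles of equal slope. A direct attempt using Hermitian--Einstein metrics fails because Theorem~\ref{thm0} gives such metrics only for polystable bundles, not for arbitrary semistable ones.

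First I would establish that every flat semistable vector bundle $V$ over $M$ admits a filtration by flat subbundles
$0\,=\,V^{(0)}\,\subset\, V^{(1)}\,\subset\,\cdots\,\subset\,V^{(s)}\,=\,V$
whose successive quotients $V^{(i)}/V^{(i-1)}$ are all flat, stable, and of slope $\mu_g(V)$. To produce $V^{(1)}$, one picks a flat subbundle of $V$ of positive rank with $\mu_g=\mu_g(V)$ having the smallest possible rank; it is automatically stable by the argument in the proof of Lemma~\ref{lem1}. One then checks that $V/V^{(1)}$ is again semistable of slope $\mu_g(V)$ (any flat subbundle of $V/V^{(1)}$ is of the form $W/V^{(1)}$ with $W\subset V$ flat, and the short exact sequence relating the degrees of $V^{(1)}$, $W$, and $W/V^{(1)}$ forces $\mu_g(W/V^{(1)})\,\leq\,\mu_g(V)$), so induction on rank completes the construction.

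Applying this to $V_1$ and $V_2$ individually and then tensoring the first filtration with $V_2$ and refining by the filtration of $V_2$, I get a filtration of $V_1\otimes V_2$ by flat subbundles whose successive quotients are of the form $A\otimes B$ with $A$ and $B$ stable of slopes $\mu_g(V_1)$ and $\mu_g(V_2)$ respectively. By Lemma~\ref{lem3}, each such $A\otimes B$ is polystable, and in particular semistable, of slope $\mu_g(V_1)+\mu_g(V_2)\,=\,\mu_g(V_1\otimes V_2)$. The proof is then finished by the elementary observation that if $0\,\to\, A\,\to\, W\,\to\, B\,\to\,0$ is a short exact sequence of flat vector bundles with $A$ and $B$ semistable of the same slope $\mu$, then $W$ is semistable of slope $\mu$: a flat subbundle $F\subset W$ sits in $0\to F\cap A\to F\to F/(F\cap A)\to 0$ with $F\cap A\subset A$ and $F/(F\cap A)\subset B$, so both factors have slope $\leq\,\mu$ and hence $\mu_g(F)\,\leq\,\mu$ by the usual degree-additivity. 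Iterating this extension fact up the filtration of $V_1\otimes V_2$ yields semistability. The main obstacle is cleanly setting up the Jordan--H\"older filtration and verifying that semistability persists in the quotient $V/V^{(1)}$; once that is in hand, the remaining steps are routine and rely only on Lemma~\ref{lem3}, Corollary~\ref{co2}, and the slope inequality for short exact sequences.
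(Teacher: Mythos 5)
Your proposal is correct and follows essentially the same route as the paper: both reduce to Lemma \ref{lem3} by filtering the semistable factors with Jordan--H\"older type filtrations whose stable quotients have slope equal to that of the bundle, tensoring these filtrations, and then using that an iterated extension of semistable flat bundles of equal slope is semistable. You merely fill in two steps the paper leaves implicit (the construction of the filtration with stable quotients and the extension argument), and you refine the two filtrations simultaneously where the paper first treats $V_2$ stable and then iterates; these are presentational, not substantive, differences.
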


\begin{proof}
First assume that $V_2$ is stable. Let
\begin{equation}\label{e8}
0\,=\, W_0\, \subset\, W_1\, \subset\, \cdots
\, \subset\, W_{n-1} \, \subset\, W_n\, =\, V_1
\end{equation}
be a filtration of flat subbundles such that
each successive quotient $W_i/W_{i-1}$, $1\, \leq\, i\, \leq\, n$,
is stable with $\mu_g(W_i/W_{i-1})\,=\,\mu_g(V_1)$.
Consider the filtration of flat subbundles
\begin{equation}\label{re1}
0\,=\, W_0\otimes V_2\, \subset\, W_1\otimes V_2\, \subset\,
\cdots\, \subset\, W_{n-1}\otimes V_2 \, \subset\, W_n\otimes V_2
\, =\, V_1\otimes V_2
\end{equation}
obtained by tensoring the filtration in \eqref{e8} by $V_2$.
Each successive quotient in this filtration is polystable
by Lemma \ref{lem3}; also,
$$
\mu_g((W_i/W_{i-1})\otimes V_2)\,=\, \mu_g(W_i/W_{i-1})+
\mu_g(V_2)\, =\, \mu_g(V_1)+ \mu_g(V_2)\, .
$$
In view of these properties of the successive quotients
for the filtration in \eqref{re1} we conclude that
$V_1\otimes V_2$ is semistable.

If $V_2$ is not stable, then fix a filtration
$$
0\,=\, W'_0\, \subset\, W'_1\, \subset\, \cdots
\, \subset\, W'_{m-1} \, \subset\, W'_m\, =\, V_2
$$
such that each successive quotient
$W'_i/W'_{i-1}$, $1\, \leq\, i\, \leq\, m$,
is stable, and $\mu_g(W'_i/W'_{i-1})\,=\,\mu_g(V_2)$. Consider
the filtration of $V_1\otimes V_2$
\begin{equation}\label{e9}
0\,=\, V_1\otimes W'_0\, \subset\, V_1\otimes W'_1\, \subset\, \cdots
\, \subset\, V_1\otimes W'_{m-1} \, \subset\, V_1\otimes W'_m\, =\,
V_1\otimes V_2
\end{equation}
obtained by tensoring the above filtration with $V_1$.
For each $1\, \leq\, i\, \leq\, m$, the quotient
$$
(V_1\otimes W'_i)/(V_1\otimes W'_{i-1})\,=\,
V_1\otimes (W'_i/W'_{i-1})
$$
in \eqref{e9} is semistable by the earlier observation, and
furthermore,
$$
\mu_g(V_1\otimes (W'_i/W'_{i-1}))\,=\,
\mu_g(V_1)+\mu_g(W'_i/W'_{i-1})\,=\,\mu_g(V_1)+\mu_g(V_2)\, .
$$
Hence $V_1\otimes V_2$ is semistable.
\end{proof}

\begin{corollary}\label{cor0}
Let $V$ be a flat vector bundle over $M$. Take any integer
$j\, \in\, [1\, ,{\rm rank}(V)]$. If $V$ is polystable, then
the exterior power $\bigwedge^j V$ equipped with the induced
flat connection is polystable. If $V$ is semistable, then
$\bigwedge^j V$ equipped with the induced
flat connection is semistable.
\end{corollary}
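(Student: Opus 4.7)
The plan is to realize $\bigwedge^j V$ as a flat direct summand of the $j$-fold tensor power $V^{\otimes j}$, and then transfer (semi)stability from the tensor power to the summand. Concretely, the antisymmetrization operator
$$
\pi_j\,:=\,\frac{1}{j!}\sum_{\sigma\in S_j}\mathrm{sgn}(\sigma)\,\sigma
$$
acts on $V^{\otimes j}$ as a flat idempotent endomorphism, since the tensor-product flat connection commutes with the $S_j$-action. Its image is canonically $\bigwedge^j V$ equipped with the induced flat connection, so we obtain a flat splitting $V^{\otimes j}\,\cong\,\bigwedge^j V\oplus\ker(\pi_j)$; in particular, $\bigwedge^j V$ is simultaneously a flat subbundle and a flat quotient of $V^{\otimes j}$. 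Since the coefficients $\tfrac{1}{j!}$ and $\mathrm{sgn}(\sigma)$ are real, this construction is valid for real flat vector bundles as well.

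For the semistable case, I would apply Proposition \ref{prop2} inductively $j-1$ times to conclude that $V^{\otimes j}$ is semistable whenever $V$ is. A standard computation shows
$$
\mu_g\Bigl(\bigwedge\nolimits^j V\Bigr)\,=\,j\,\mu_g(V)\,=\,\mu_g(V^{\otimes j})\, ,
$$
using $\mathrm{rank}(\bigwedge^j V)=\binom{r}{j}$ together with the flat isomorphism $\det(\bigwedge^j V)\,\cong\,(\det V)^{\otimes\binom{r-1}{j-1}}$, which gives $\deg_g(\bigwedge^j V)=\binom{r-1}{j-1}\deg_g(V)$. If $F\,\subset\,\bigwedge^j V$ were a flat subbundle with $\mu_g(F)>\mu_g(\bigwedge^j V)$, then via the inclusion $\bigwedge^j V\hookrightarrow V^{\otimes j}$ it would also be a flat subbundle of $V^{\otimes j}$ with slope exceeding $\mu_g(V^{\otimes j})$, contradicting the semistability of $V^{\otimes j}$.

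For the polystable case, the cleanest route is via Hermitian--Einstein metrics, mirroring the proof of Lemma \ref{lem3}. By Theorem \ref{thm0}, the polystability of $V$ yields an affine Hermitian--Einstein metric $h$ on $V$; the induced metric $h^{\otimes j}$ on $V^{\otimes j}$ is again Hermitian--Einstein. Because $\pi_j$ is a parallel projection, the orthogonal-direct-sum decomposition $V^{\otimes j}\,=\,\bigwedge^j V\oplus\ker(\pi_j)$ is preserved by the connection (the second fundamental form vanishes), so the restriction of $h^{\otimes j}$ to $\bigwedge^j V$ coincides with the induced metric on the exterior power and is itself Hermitian--Einstein. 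Applying Theorem \ref{thm0} in the reverse direction then gives polystability of $\bigwedge^j V$. Alternatively, one may iterate Corollary \ref{co2} to see that $V^{\otimes j}$ is polystable and then invoke the same HE argument to pass to the flat summand.

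The only point requiring care is the verification that $\pi_j$ is parallel with respect to the tensor-product connection and that its image carries precisely the induced flat structure on $\bigwedge^j V$; this is essentially tautological from the functoriality of flat connections under the $S_j$-action, and is the only technical input needed beyond the tools already developed in the preceding sections. No further analytic estimates are required.
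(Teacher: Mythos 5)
Your proof is correct and takes essentially the same route as the paper: realize $\bigwedge^j V$ as a flat direct summand of $V^{\otimes j}$, obtain semistability (respectively polystability) of $V^{\otimes j}$ from Proposition \ref{prop2} (respectively Corollary \ref{co2}), and pass to the summand. The paper simply asserts that the property descends to the flat direct summand, whereas you spell this step out via the slope computation $\mu_g(\bigwedge^j V)=j\mu_g(V)=\mu_g(V^{\otimes j})$ and the restriction of the Hermitian--Einstein metric along the parallel projector $\pi_j$; these are correct elaborations rather than a different argument.
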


\begin{proof}
If $V$ is polystable, then from Corollary \ref{co2} it follows
that $V^{\otimes j}$ equipped with the induced flat connection is
polystable. Since the flat vector bundle
$\bigwedge^j V$ is a direct summand of the flat vector bundle
$V^{\otimes j}$, we conclude that $\bigwedge^j V$ is polystable
if $V^{\otimes j}$ is so.

If $V$ is semistable, then from Proposition \ref{prop2} it follows
that $V^{\otimes j}$ equipped with the induced flat connection is
semistable. Therefore, the direct summand $\bigwedge^j V
\, \subset\, V^{\otimes j}$ is semistable.
\end{proof}

\section{Principal bundles on flat affine manifolds}\label{sec4}

\subsection{Preliminaries} Let $H$ be a Lie group. A principal
$H$--bundle on $M$ is a
triple of the form $(E_H\, ,p\, ,\psi)$, where $E_H$ is
a $C^\infty$ manifold, $p\, :\, E_H\, \longrightarrow\, M$
is a $C^\infty$ surjective submersion, and
$$
\psi\, :\, E_H\times H\, \longrightarrow\, E_H
$$
is a smooth action of $H$ on $E_H$, such that
\begin{enumerate}
\item $p\circ \psi\, =\, p\circ p_1$, where $p_1$ is the natural
projection of $E_H\times H$ to $E_H$, and

\item for each point $x\, \in\, M$, there is an open neighborhood
$U\, \subset\, M$ of $x$, and a smooth diffeomorphism
$$
\phi\, :\, p^{-1}(U) \, \longrightarrow\, U\times H\, ,
$$
such that $\phi$ commutes with the actions of $H$ (the group $H$
acts on $U\times H$ through right translations of $H$), and
$q_1\circ\phi\, =\, p$, where $q_1$ is the natural projection of
$U\times H$ to $U$.
\end{enumerate}

Let $dp\, :\, TE_H\, \longrightarrow\, p^*TM$ be the differential
of the projection $p$. A \textit{flat connection} on $E_H$ is
a $C^\infty$ homomorphism
$$
D\, :\, p^*TM\, \longrightarrow\,TE_H
$$
such that
\begin{itemize}
\item $dp\circ D\, =\, \text{Id}_{p^*TM}$,

\item the distribution $D(p^*TM)\, \subset\, TE_H$ is integrable, and

\item $D(p^*TM)$ is invariant under the action of $H$ on $TE_H$
given by the action of $H$ on $E_H$.
\end{itemize}

Let $H'\, \subset\, H$ be a closed subgroup. A \textit{reduction}
of structure group of a principal $H$--bundle $E_H$ to $H'$
is a principal $H'$--bundle $E_{H'}\, \subset\, E_H$; the action
of $H'$ on $E_{H'}$ is the restriction of the action of $H$ on
$E_H$. A reduction of structure group of $E_H$ to $H'$
is given by a smooth section of the fiber bundle
$E_H/H'\,\longrightarrow \, M$. We note that
if a reduction $E_{H'}\,\subset\, E_H$ corresponds to a section
$\sigma$, then $E'_H$ is the inverse image of $\sigma(M)$ for the
quotient map $E_H\, \longrightarrow\, E_H/H'$.

Let $D$ be a flat connection on $E_H$. A reduction of structure
group $E_{H'}\, \subset\, E_H$ to $H'$
is said to be \textit{compatible} with $D$ if for each point
$z\, \in \, E_{H'}$, the subspace $D(T_{p(z)}M)\, \subset\,
T_zE_H$ is contained in the subspace $T_z E_{H'}
\, \subset\, T_zE_H$. Note that this condition ensures
that $D$ produces a flat connection on $E_{H'}$.

Consider the adjoint action of $H$ on its Lie algebra
$\text{Lie}(H)$. Let
\begin{equation}\label{r-Ad}
\text{ad}(E_H) \, :=\, E_H\times^H \text{Lie}(H)\, \longrightarrow\, M
\end{equation}
be the vector bundle over $M$ associated to the principal $H$--bundle
$E_H$ for this action; it is known as the \textit{adjoint vector bundle}
for $E_H$. Since the adjoint action of $H$ on $\text{Lie}(H)$
preserves the Lie algebra structure, the fibers
of $\text{ad}(E_H)$
are Lie algebras isomorphic to $\text{Lie}(H)$. The connection $D$ on
$E_H$ induces
a connection on every fiber bundle associated to $E_H$. In
particular, $D$ induces a connection on the vector
bundle $\text{ad}(E_H)$; this induced connection on
$\text{ad}(E_H)$ will be denoted by $D^{\rm ad}$. The
connection $D^{\rm ad}$ is compatible with the Lie algebra structure
of the fibers of $\text{ad}(E_H)$, meaning
$$
D^{\rm ad}([s\, ,t]) \,=\, [D^{\rm ad}(s)\, ,t] +
[s\, ,D^{\rm ad}(t)]
$$
for all locally defined smooth sections $s$ and $t$ of
$\text{ad}(E_H)$.

\subsection{Stable and semistable principal bundles}\label{sec.s}

Let $G_{\mathbb C}$ be a complex reductive linear algebraic group.
A real form on $G_{\mathbb C}$ is an anti-holomorphic involution
$$
\sigma_{G_{\mathbb C}}\, :\, G_{\mathbb C}\,
\longrightarrow\, G_{\mathbb C}\, .
$$
The real form $\sigma_{G_{\mathbb C}}$ is said to be of
\textit{split type} if there is a maximal torus
$T\, \subset\, G_{\mathbb C}$ such that $\sigma_{G_{\mathbb C}}(T)
\, =\, T$ and the fixed point locus of the involution
$\sigma_{G_{\mathbb C}}\vert_T$ of $T$ is a
product of copies
of ${\mathbb R}^*$ (the group of nonzero real numbers).

Let $G$ be a connected Lie group such that either
it is a complex reductive linear algebraic group or it is the
fixed point locus of a split real form $\sigma_{G_{\mathbb C}}
\,\in\, \text{Aut}(G_{\mathbb C})$, where $G_{\mathbb C}$ and
$\sigma_{G_{\mathbb C}}$ are as above.

If $G$ is a complex reductive group,
a connected
closed algebraic subgroup $P\, \subset\, G$ is called a
\textit{parabolic} subgroup if the quotient variety $G/P$ is
complete. So, in particular, $G$ itself is a parabolic subgroup.
Let $P$ be a parabolic subgroup of $G$. A character
$\chi$ of $P$ is called \textit{strictly anti--dominant}
if the following two conditions hold:
\begin{itemize}
\item the line bundle over $G/P$ associated to the principal
$P$--bundle $G\, \longrightarrow\, G/P$ for $\chi$ is ample, and

\item the character $\chi$ is trivial on the connected component
of the center of $P$ containing the identity element.
\end{itemize}
Let $R_u(P)\, \subset\, P$ be the unipotent radical. The group
$P/R_u(P)$ is called the \textit{Levi quotient} of $P$.
A \textit{Levi subgroup} of $P$ is a connected reductive subgroup
$L(P)\, \subset\, P$ such that the composition
$$
L(P)\, \longrightarrow\, P\, \longrightarrow\,P/R_u(P)
$$
is an isomorphism. A Levi subgroup always exists
(see \cite[page 158, \S~11.22]{Bo} and
\cite[page 184, \S~30.2]{Hu}).

If $G$ is the fixed point locus of a real form
$(G_{\mathbb C}\, ,\sigma_{G_{\mathbb C}})$, by a parabolic
subgroup of $G$ we will mean a subgroup $P\, \subset\, G$ such that
there is a parabolic subgroup $P'\, \subset\, G_{\mathbb C}$
satisfying the conditions that $\sigma_{G_{\mathbb C}}(P')
\,=\, P'$ and $P'\bigcap G\, =\, P$. By a Levi subgroup of
the parabolic subgroup $P$ we will mean a subgroup $L(P)\, \subset\,
P$ such that there is a Levi subgroup $L(P')\, \subset\,
P'$ satisfying the conditions that $\sigma_{G_{\mathbb C}}(L(P'))
\,=\, L(P')$ and $L(P')\bigcap G\, =\, L(P)$.

Let $(E_G\, ,D)$ be a flat principal $G$--bundle over $M$.

It is called \textit{semistable} (respectively,
\textit{stable}) if for every triple of the form
$(Q\, , E_Q\, , \lambda)$, where
$Q\, \subset\, G$ is a proper parabolic
subgroup, and $E_Q\, \subset\, E_G$ is a reduction of structure
group of $E_G$ to $Q$ compatible with $D$, and
$\lambda$ is a strictly anti--dominant character of $Q$,
the inequality
\begin{equation}\label{es}
\text{deg}_g(E_Q(\lambda)) \, \geq\, 0
\end{equation}
(respectively, $\text{deg}_g(E_Q(\lambda)) \, > \, 0$) holds,
where $E_Q(\lambda)$ is the flat line bundle over $M$ associated to
the flat principal $Q$--bundle $E_Q$ for the character $\lambda$ of $Q$.

In order to decide whether $(E_G\, ,D)$ is
semistable (respectively, stable), it suffices to verify the
above inequality (respectively, strict inequality) only for
those $Q$ which are proper maximal parabolic subgroups of $G$.
More precisely, $E_G$ is semistable (respectively, stable)
if and only if for every pair $(Q\, , \sigma)$, where
$Q\, \subset\, G$ is a proper maximal parabolic subgroup,
and $\sigma\, :\, M\, \longrightarrow\, E_G/Q$
is a reduction of structure
group of $E_G$ to $Q$ compatible with $D$,
the inequality
\begin{equation}\label{st.eq}
\text{deg}_g(\sigma^* T_{\text{rel}})\, \geq\, 0
\end{equation}
(respectively, $\text{deg}_g(\sigma^* T_{\text{rel}})\,
>\, 0$) holds, where $T_{\text{rel}}$ is the relative
tangent bundle over $E_G/Q$ for the projection
$E_G/Q\, \longrightarrow\, M$. (See
\cite[page 129, Definition 1.1]{Ra} and \cite[page 131,
Lemma 2.1]{Ra}.) It should be mentioned that the connection
$D$ on $E_G$ induces a flat connection on the associated fiber
bundle $E_G/Q\, \longrightarrow\, M$. Since the section $\sigma$
is flat with respect to this induced connection (it is flat
because the reduction $E_Q$ is compatible with $D$), the
pullback $\sigma^* T_{\text{rel}}$ gets a flat connection.

Let $(E_G\, ,D)$ be a flat principal $G$--bundle over $M$. A reduction
of structure group
\[
E_Q\, \subset\, E_G
\]
to a parabolic subgroup $Q\, \subset\, G$ compatible with
$D$ is called \textit{admissible} if for each character $\lambda$ of
$Q$ trivial on the center of $G$, the associated flat line
bundle $E_Q(\lambda)\, \longrightarrow\, M$ satisfies the following
condition:
\begin{equation}\label{admiss}
\text{deg}_g(E_Q(\lambda)) \, =\, 0\, .
\end{equation}

We will call $(E_G\, ,D)$ to be
\textit{polystable} if either $E_G$ is stable, or there
is a proper parabolic subgroup $Q$ and a reduction of
structure group $E_{L(Q)}\, \subset\, E_H$ to
a Levi subgroup $L(Q)$ of $Q$ compatible with $D$ such that
the flat principal $L(Q)$--bundle $E_{L(Q)}$ is stable, and
the reduction of structure group of $E_G$ to $Q$,
obtained by extending the structure group of $E_{L(Q)}$
using the inclusion of $L(Q)$ in $Q$, is admissible.

We note that a flat polystable principal $G$--bundle on $M$
is semistable.

For notational convenience, we will omit the symbol of
connection for a flat principal bundle. When we will say
``$E_G$ be a flat principal $G$--bundle'' it will mean that
$E_G$ is equipped with a flat connection.

\subsection{Harder--Narasimhan reduction of principal bundles}

Let $G$ be as before. Let $E_G$ be
a flat principal $G$--bundle over $M$.

A \textit{Harder--Narasimhan reduction} of $E_G$ is a
pair of the form $(P\, ,E_P)$, where $P\, \subset\, G$ is
a parabolic subgroup, and $E_P\, \subset\, E_G$ is a reduction
of structure group of $E_G$ to $P$ compatible with the connection
such that the following two conditions hold:
\begin{enumerate}
\item The principal $P/R_u(P)$--bundle $E_P/R_u(P)$
equipped with the induced flat connection is semistable, where
$R_u(P)\, \subset\, P$ is the unipotent radical.

\item For any nontrivial character $\chi$ of $P$ which can be
expressed as a nonnegative integral combination of simple roots,
the flat line bundle over $M$ associated to $E_P$ for $\chi$ is of
positive degree.
\end{enumerate}

\begin{proposition}\label{prop3}
A flat principal $G$--bundle $E_G$ admits a Harder--Narasimhan
reduction. If $(P\, ,E_P)$ and $(P\, ,E_P)$ are two Harder--Narasimhan
reductions of $E_G$, then there is an element $g\,\in\, G$ such that
$Q\,=\, g^{-1}Pg$ and $E_Q\,=\, E_Pg$.
\end{proposition}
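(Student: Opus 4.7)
The plan is to adapt the classical construction (as in Anchouche--Biswas for the K\"ahler case) by passing through the adjoint bundle. First, apply Theorem \ref{thm-1} to the flat vector bundle $\mathrm{ad}(E_G)$ (with its induced flat connection $D^{\mathrm{ad}}$) to obtain the canonical filtration
$$0 \,=\, V_0 \,\subset\, V_1 \,\subset\, \cdots \,\subset\, V_m \,=\, \mathrm{ad}(E_G),$$
and let $k_0$ (respectively, $k^+$) be the largest index for which $\mu_g(V_{k_0}/V_{k_0-1})\,\geq\, 0$ (respectively, $>0$). Set $\mathfrak{p}\,:=\, V_{k_0}$ and $\mathfrak{n}\,:=\, V_{k^+}$. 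The key step is to use the flat Lie bracket $[\,\cdot\,,\,\cdot\,]\colon \mathrm{ad}(E_G)\otimes\mathrm{ad}(E_G)\to\mathrm{ad}(E_G)$ together with Proposition \ref{prop2} to show $[\mathfrak{p},\mathfrak{p}]\,\subset\,\mathfrak{p}$ and $[\mathfrak{p},\mathfrak{n}]\,\subset\,\mathfrak{n}$: the graded pieces of the HN filtration of $\mathfrak{p}\otimes\mathfrak{p}$ are semistable of slopes $\geq 0$, while the graded pieces of $\mathrm{ad}(E_G)/\mathfrak{p}$ are semistable of slopes $<0$, so the slope-Hom vanishing observation used in Lemma \ref{lem1} forces the induced bracket to vanish modulo $\mathfrak{p}$. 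Analogous reasoning gives the ideal property of $\mathfrak{n}$ inside $\mathfrak{p}$.

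The main obstacle is to verify that each fiber of the Lie algebra subbundle $\mathfrak{p}$ is actually a parabolic subalgebra of the corresponding fiber of $\mathrm{ad}(E_G)\,\cong\,\mathrm{Lie}(G)$ (or of $\mathrm{Lie}(G_{\mathbb{C}})$ after complexification in the real case). Here one invokes the structural fact that a Lie subalgebra of a reductive Lie algebra which admits a decomposition into a reductive quotient $\mathfrak{p}/\mathfrak{n}$ and a nilpotent ideal $\mathfrak{n}$ whose weights are positive with respect to a cocharacter of the Levi must be parabolic; the strictly positive slopes of the graded pieces of $\mathfrak{n}$ supply exactly the required cocharacter. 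The normalizer bundle of $\mathfrak{p}$ inside $E_G$ then defines a reduction $E_P\,\subset\, E_G$ to a parabolic subgroup $P\,\subset\, G$, and this reduction is compatible with $D$ because $\mathfrak{p}$ is $D^{\mathrm{ad}}$-invariant. The quotient $E_P/R_u(P)$ corresponds to the slope-zero semistable piece $V_{k_0}/V_{k^+}$, which yields condition (1) of the definition. For condition (2), any character $\chi$ expressible as a nonnegative integral combination of simple roots pairs positively with the weights of $\mathfrak{n}$, so $\deg_g(E_P(\chi))$ is a positive combination of the positive slopes arising in $\mathfrak{n}$, hence positive.

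For uniqueness, suppose $(Q, E_Q)$ is another Harder--Narasimhan reduction of $E_G$. A central one-parameter subgroup of a Levi of $Q$ induces a $Q$-invariant weight filtration of $\mathrm{Lie}(G)$, which in turn produces a flat filtration of $\mathrm{ad}(E_G)$. Using semistability of $E_Q/R_u(Q)$ and condition (2) applied to $(Q, E_Q)$, this filtration has semistable successive quotients with strictly decreasing slopes, so by the uniqueness in Theorem \ref{thm-1} it must coincide with the canonical filtration of $\mathrm{ad}(E_G)$ constructed above. In particular, the Lie algebra subbundle coming from $(Q, E_Q)$ equals $\mathfrak{p}\,=\, V_{k_0}$. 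Consequently, at any point $x\,\in\, M$ the parabolic subalgebras coming from $P$ and from $Q$ agree as subalgebras of the fiber, so $P$ and $Q$ are conjugate in $G$ by some element $g$, and the identification $E_Q\,=\, E_P\cdot g$ follows since both reductions arise as the normalizer bundle of the same subalgebra of $\mathrm{ad}(E_G)$.
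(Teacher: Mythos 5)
Your overall strategy is the one the paper itself follows (and attributes to \cite{AAB}): take the Harder--Narasimhan filtration of $\mathrm{ad}(E_G)$ from Theorem \ref{thm-1}, use bracket--closedness coming from Proposition \ref{prop2} and the slope--Hom vanishing of Lemma \ref{lem1}, show the nonnegative--slope part $\mathfrak p$ is fiberwise a parabolic subalgebra, and then deduce the two defining conditions of the reduction. The gap is precisely at the step you call the ``main obstacle''. The structural fact you invoke --- that a subalgebra $\mathfrak p$ of a reductive Lie algebra possessing a nilpotent ideal $\mathfrak n$ with reductive quotient $\mathfrak p/\mathfrak n$ and with the weights of $\mathfrak n$ positive for some cocharacter of the Levi must be parabolic --- is false. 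In $\mathfrak{sl}_3$ take $\mathfrak p\,=\,\mathbb{C}h_\alpha\oplus\mathbb{C}e_\alpha$ for a simple root $\alpha$: it satisfies all of your hypotheses (with $\mathfrak n\,=\,\mathbb{C}e_\alpha$ and cocharacter $h_\alpha$), yet it is $2$--dimensional and hence not parabolic. Positivity of the weights of $\mathfrak n$ gives no lower bound on the size of $\mathfrak p$ inside $\mathfrak g$; parabolicity requires knowing that $\mathfrak p$ exhausts the full nonnegative eigenspace of a cocharacter, equivalently a dimension or orthogonality statement, and nothing in your argument supplies this.

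What actually closes this gap --- and what \cite[Lemma 4]{AAB}, cited by the paper for exactly this point, uses --- is the self--duality of the Harder--Narasimhan filtration of $\mathrm{ad}(E_G)$. The flat nondegenerate $G$--invariant bilinear form on $\mathfrak g$ identifies $\mathrm{ad}(E_G)$ with $\mathrm{ad}(E_G)^*$ as flat bundles, so the uniqueness statement of Theorem \ref{thm-1} forces $V_i^{\perp}\,=\,V_{\ell-i}$ for the filtration $0\subset V_1\subset\cdots\subset V_\ell=\mathrm{ad}(E_G)$; in particular $\mathfrak n\,=\,V_{k^+}\,=\,\mathfrak p^{\perp}$, so that $\dim\mathfrak p+\dim\mathfrak n\,=\,\dim\mathfrak g$ fiberwise (this duality is also the source of the oddness of $\ell$ that the paper records via \cite{AB}). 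With $\mathfrak p^{\perp}$ contained in $\mathfrak p$ and consisting of nilpotent elements (nilpotency itself needs an argument, which you only assert), one can then apply the correct Lie--theoretic criterion for a subalgebra to be parabolic. Since you never establish or use this orthogonality, the fiberwise parabolicity --- the heart of the existence half --- remains unproven in your write--up. Your uniqueness paragraph is likewise only a sketch (semistability of the graded pieces of the cocharacter filtration and the strict decrease of their slopes require the computations carried out in \cite{AB} and \cite{AAB}), but the essential missing idea is the duality above.
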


\begin{proof}
Let $\text{ad}(E_G)\, \longrightarrow\, M$ be the adjoint vector
bundle of $E_G$ (defined in \eqref{r-Ad}). As mentioned earlier,
the flat connection on $E_G$ induces a flat connection
on $\text{ad}(E_G)$. Let
\begin{equation}\label{fl}
0\,=\, V_0\, \subset\, V_1\,\subset\, \cdots\,\subset\,
V_{\ell-1}\, \subset\, V_\ell\,=\, \text{ad}(E_G)
\end{equation}
be the Harder--Narasimhan filtration of the flat vector bundle
$\text{ad}(E_G)$ constructed in Theorem \ref{thm-1}. Using
Proposition \ref{prop2} it can be deduced that $\ell$ in
\eqref{fl} is an odd integer; its proof is identical to the
proof of (3) in \cite[p. 215]{AB}. The flat subbundle
\begin{equation}\label{el1}
V_{(\ell+1)/2}\, \subset\, \text{ad}(E_G)
\end{equation}
in \eqref{fl}
is the adjoint vector bundle of a reduction of structure group of
$E_G$ to a parabolic subgroup; its proof is identical to the
proof of \cite[p. 699, Lemma 4]{AAB}. After we fix a parabolic
subgroup $P$ of $G$ in the conjugacy class of parabolic subgroups
of $G$ defined by the subalgebra $(E_{(\ell+1)/2})_x\, \subset\,
\text{ad}(E_G)_x$, where $x\, \in\, M$, we get a reduction of
structure group of $E_G$ to $P$ compatible with the connection.
This reduction satisfies all the conditions in the proposition.
The details of the argument are in \cite{AAB}.
\end{proof}

The second condition in the Harder--Narasimhan reduction can be
reformulated in other equivalent ways; see \cite{AAB}.

\begin{remark}
{\rm Proposition \ref{prop3} can also be proved by imitating
the proof of Proposition 3.1 in \cite{BH}.}
\end{remark}

Proposition \ref{prop3} has the following corollary:

\begin{corollary}\label{cor2}
A flat principal $G$--bundle $E_G$ over $M$ is semistable if and
only if the flat vector bundle ${\rm ad}(E_G)$ is semistable.
\end{corollary}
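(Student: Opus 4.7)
The plan is to translate semistability of $E_G$ into semistability of its adjoint vector bundle by combining Proposition \ref{prop3} in one direction with a direct slope calculation for an adjoint reduction in the other. First I would record that $\deg_g(\text{ad}(E_G))\,=\,0$. Indeed, since $G$ is reductive (either directly, or as the fixed point locus of a split real form of a complex reductive $G_{\mathbb C}$), the adjoint representation has trivial determinant character: on a maximal torus the nonzero weights of the adjoint action are the roots, which pair up into $\pm\alpha$, and on the derived subgroup every character is trivial. Hence the flat line bundle $\det(\text{ad}(E_G))$ is associated to the trivial character of $G$, so it is trivial and $\mu_g(\text{ad}(E_G))\,=\,0$.

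For the ``if'' direction, I would assume that $\text{ad}(E_G)$ is semistable and, for contradiction, that $E_G$ is not. By the maximal parabolic criterion \eqref{st.eq}, there is a proper maximal parabolic $Q\,\subset\, G$ together with a flat reduction $E_Q\,\subset\, E_G$, given by a section $\sigma\,:\,M\,\longrightarrow\, E_G/Q$, such that $\deg_g(\sigma^* T_{\text{rel}}) \,<\, 0$. The canonical isomorphism $\sigma^* T_{\text{rel}}\,\cong\, E_Q\times^Q(\text{Lie}(G)/\text{Lie}(Q))$, combined with the $Q$--equivariant short exact sequence $0\,\to\, \text{Lie}(Q)\,\to\, \text{Lie}(G)\,\to\, \text{Lie}(G)/\text{Lie}(Q)\,\to\, 0$, yields a short exact sequence of flat vector bundles
\begin{equation*}
0\,\longrightarrow\,\text{ad}(E_Q)\,\longrightarrow\,\text{ad}(E_G)\,\longrightarrow\, \sigma^* T_{\text{rel}}\,\longrightarrow\, 0\, .
\end{equation*}
Taking degrees and using $\deg_g(\text{ad}(E_G))\,=\,0$ gives $\mu_g(\text{ad}(E_Q))\,>\,0\,=\,\mu_g(\text{ad}(E_G))$, which contradicts semistability of $\text{ad}(E_G)$.

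For the ``only if'' direction, I would assume that $E_G$ is semistable and, for contradiction, that $\text{ad}(E_G)$ is not. Then the Harder--Narasimhan filtration \eqref{fl} of $\text{ad}(E_G)$ has odd length $\ell\,\geq\,3$, so the flat subbundle $V_{(\ell+1)/2}$ of \eqref{el1} is a proper nonzero flat subbundle of $\text{ad}(E_G)$. By Proposition \ref{prop3}, $V_{(\ell+1)/2}$ is the adjoint bundle of a reduction $E_P\,\subset\, E_G$ to a proper parabolic subgroup $P\,\subsetneq\, G$ compatible with the connection. Condition (2) in the definition of a Harder--Narasimhan reduction then furnishes a nontrivial character $\chi$ of $P$, expressible as a nonnegative integral combination of simple roots, with $\deg_g(E_P(\chi))\,>\,0$. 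A suitable positive rational multiple of $-\chi$ is strictly anti-dominant in the sense of Section \ref{sec.s}, so the triple $(P\, ,E_P\, ,-\chi)$ violates the semistability inequality \eqref{es} for $E_G$, a contradiction.

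The main obstacle is the last step: carefully matching sign conventions so that the character produced by condition (2) of the Harder--Narasimhan reduction, after negation and rescaling, lies in the cone of strictly anti-dominant characters on which the semistability of $E_G$ is tested. The case where $G$ is the real form is handled by the same arguments, since both Proposition \ref{prop3} and the Ramanathan criterion \eqref{st.eq} are phrased via $\sigma_{G_{\mathbb C}}$--invariant data and transfer to $G$ unchanged.
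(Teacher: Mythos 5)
Your overall strategy is sound, and in one direction it is genuinely different from the paper's. For the implication ``$\mathrm{ad}(E_G)$ semistable $\Rightarrow$ $E_G$ semistable'' the paper argues through Proposition \ref{prop3}: semistability of $\mathrm{ad}(E_G)$ forces the Harder--Narasimhan reduction of $E_G$ to be $(G\,,E_G)$ itself, and condition (1) in the definition of that reduction then gives semistability. Your contrapositive argument --- a destabilizing reduction $E_Q$ to a maximal parabolic yields the flat exact sequence $0\to \mathrm{ad}(E_Q)\to \mathrm{ad}(E_G)\to \sigma^*T_{\mathrm{rel}}\to 0$, and $\deg_g(\mathrm{ad}(E_G))=0$ forces $\mu_g(\mathrm{ad}(E_Q))>0=\mu_g(\mathrm{ad}(E_G))$ --- is correct and more self-contained; it only uses the criterion \eqref{st.eq} and additivity of degree. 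Your justification that $\deg_g(\mathrm{ad}(E_G))=0$ via triviality of $\det\circ\mathrm{Ad}$ is fine (the paper obtains the same vanishing from the invariant bilinear form, in the proof of Theorem \ref{thm-2}).

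For the other implication your route is essentially the paper's (Harder--Narasimhan filtration of $\mathrm{ad}(E_G)$, middle term $V_{(\ell+1)/2}=\mathrm{ad}(E_P)$, then destabilize $E_G$ by a character of $P$), but the step ``condition (2) furnishes a character $\chi$ such that a positive multiple of $-\chi$ is strictly anti-dominant'' is not correct as stated, and the obstacle is not merely sign conventions. Condition (2) applies to \emph{every} nontrivial character of $P$ that is a nonnegative integral combination of simple roots, and for a generic such $\chi$ no positive multiple of $-\chi$ is anti-dominant: for $P=B$ in $\mathrm{SL}_3$ and $\chi=\alpha_1$ one has $\langle -\chi,\alpha_1^\vee\rangle>0>\langle -\chi,\alpha_2^\vee\rangle$, so the associated line bundle on $G/B$ is never ample. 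You must make a specific choice: take $\chi_0$ to be the character of $P$ on $\bigwedge^{\mathrm{top}}R_u(\mathfrak p)$, i.e.\ the sum of the roots in the unipotent radical. This $\chi_0$ is a nonnegative integral combination of simple roots and is nontrivial for proper $P$, so condition (2) gives $\deg_g E_P(\chi_0)>0$; and $-\chi_0$ is the character of the $P$--module $\bigwedge^{\mathrm{top}}(\mathfrak g/\mathfrak p)$, which is strictly anti-dominant (its associated line bundle on $G/P$ is the anticanonical bundle, hence ample, and it is trivial on the connected center). Then $\deg_g E_P(-\chi_0)<0$ violates \eqref{es}. This is exactly the character the paper uses, reading off its degree directly as $\deg_g\bigl(\mathrm{ad}(E_G)/V_{(\ell+1)/2}\bigr)<0$. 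Also, take an integral (not just rational) multiple when rescaling; this does not affect the sign of the degree.
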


\begin{proof}
Assume that $\text{ad}(E_G)$ is not semistable. Then $E_{(\ell+1)/2}$
in \eqref{fl} is a proper subbundle of $\text{ad}(E_G)$. Hence
$E_G$ has a nontrivial Harder--Narasimhan reduction $(P\, , E_P)$.
Let $\mathfrak g$ and $\mathfrak p$ be the Lie algebras of $G$
and $P$ respectively. The group $P$
has the adjoint action on ${\mathfrak g}/\mathfrak p$.
The vector
bundle over $M$ associated to the principal $P$--bundle $E_P$
for the $P$--module ${\mathfrak g}/\mathfrak p$ is identified
with the vector bundle $\text{ad}(E_G)/E_{(\ell+1)/2}$. Consequently,
the reduction $E_P\, \subset\, E_G$ and the strictly
anti--dominant character of $P$ defined by the $P$--module
$\bigwedge^{\rm top} ({\mathfrak g}/\mathfrak p)$ violate
the inequality in \eqref{es}. Hence the flat principal $G$--bundle
$E_G$ is not semistable.

To prove the converse,
assume that the flat vector bundle $\text{ad}(E_G)$ is semistable.
Then $E_{(\ell+1)/2}\, =\, \text{ad}(E_G)$ (see \eqref{el1}).
Hence the Harder--Narasimhan reduction of $E_G$ is
$(G\, ,E_G)$ itself. Since the Levi quotient of $G$ is $G$
itself, from the first condition in the definition of a
Harder--Narasimhan reduction we conclude that $E_G$ is semistable.
\end{proof}

\begin{corollary}\label{cor-n1}
Assume that $G$ is the fixed point locus of a split real form
on $G_{\mathbb C}$. Let $E_G$ be a flat principal $G$--bundle
over $M$. Let $E_{G_{\mathbb C}}$
be the flat principal $G_{\mathbb C}$--bundle
over $M$ obtained by extending the structure group
of $E_G$ using the inclusion of $G$ in $G_{\mathbb C}$.
The principal $G$--bundle $E_G$ is semistable if and only
if the principal $G_{\mathbb C}$--bundle $E_{G_{\mathbb C}}$ is so.
\end{corollary}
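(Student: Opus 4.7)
The plan is to reduce the statement to a comparison of semistability for the adjoint vector bundle and for its complexification, and then to use the uniqueness of the maximal semistable subbundle to push a destabilizing complex subbundle down to a real one.

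First I would apply Corollary \ref{cor2} to both $E_G$ and $E_{G_{\mathbb C}}$: the principal $G$--bundle $E_G$ is semistable if and only if the real flat vector bundle $\mathrm{ad}(E_G)$ is semistable, and $E_{G_{\mathbb C}}$ is semistable if and only if the complex flat vector bundle $\mathrm{ad}(E_{G_{\mathbb C}})$ is semistable. Because $G$ is the fixed locus of a split real form, $\mathrm{Lie}(G_{\mathbb C})\,=\,\mathrm{Lie}(G)\otimes_{\mathbb R}{\mathbb C}$ as $G$--modules for the adjoint action, and extension of structure group commutes with the associated bundle construction. Hence
$$
\mathrm{ad}(E_{G_{\mathbb C}})\,\cong\, \mathrm{ad}(E_G)\otimes_{\mathbb R}{\mathbb C}
$$
as complex flat vector bundles over $M$. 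So it suffices to establish the following purely linear--algebraic claim: a real flat vector bundle $W\,\longrightarrow\, M$ is semistable if and only if its complexification $W_{\mathbb C}\,:=\, W\otimes_{\mathbb R}{\mathbb C}$ is semistable.

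The easy direction is that if $W_{\mathbb C}$ is semistable then so is $W$: any real flat subbundle $F\,\subset\, W$ complexifies to a complex flat subbundle $F\otimes_{\mathbb R}{\mathbb C}\,\subset\, W_{\mathbb C}$, and by computing first Chern forms from a real Euclidean metric on $F$ and on $W$ one sees that $\mu_g(F)\,=\,\mu_g(F\otimes{\mathbb C})\,\leq\,\mu_g(W_{\mathbb C})\,=\,\mu_g(W)$. For the converse, I would argue by contrapositive. Assume $W_{\mathbb C}$ is not semistable, and let $\widetilde F\,\subset\, W_{\mathbb C}$ be the unique maximal semistable flat subbundle given by Proposition \ref{prop1}, so $\mu_g(\widetilde F)\,>\,\mu_g(W_{\mathbb C})$. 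The real structure on $W$ determines a complex--antilinear, flat, fiberwise involution $\sigma$ on $W_{\mathbb C}$. Then $\sigma(\widetilde F)$ is again a complex flat subbundle of $W_{\mathbb C}$ of the same complex rank as $\widetilde F$; choosing any Hermitian metric on $W_{\mathbb C}$ coming from a real Euclidean metric on $W$ makes $\sigma$ preserve the Chern curvature formula, and therefore $\mathrm{deg}_g(\sigma(\widetilde F))\,=\,\mathrm{deg}_g(\widetilde F)$, hence $\mu_g(\sigma(\widetilde F))\,=\,\mu_g(\widetilde F)$. By the uniqueness part of Proposition \ref{prop1}, $\sigma(\widetilde F)\,=\,\widetilde F$, so $\widetilde F$ is the complexification of a real flat subbundle $F\,\subset\, W$ with $\mu_g(F)\,=\,\mu_g(\widetilde F)\,>\,\mu_g(W)$, which shows that $W$ is not semistable.

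The step I expect to be the main obstacle is the verification that conjugation by $\sigma$ preserves degree, i.e.\ that $\mathrm{deg}_g(\sigma(\widetilde F))\,=\,\mathrm{deg}_g(\widetilde F)$; this amounts to checking that under a real Euclidean metric on $W$ the induced Hermitian metric on $W_{\mathbb C}$ makes $\sigma$ into an antiholomorphic isometry, so that its action on the affine first Chern form of a subbundle is trivial. Once this is in place, the uniqueness in Proposition \ref{prop1} together with the identification $\mathrm{ad}(E_{G_{\mathbb C}})\,=\,\mathrm{ad}(E_G)\otimes_{\mathbb R}{\mathbb C}$ and Corollary \ref{cor2} combine to give both implications of the corollary.
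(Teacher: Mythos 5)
Your proposal is correct and follows essentially the same route as the paper: reduce via Corollary \ref{cor2} and the identification $\mathrm{ad}(E_{G_{\mathbb C}})\,=\,\mathrm{ad}(E_G)\otimes_{\mathbb R}\mathbb C$ to the statement that a real flat bundle is semistable if and only if its complexification is, with the nontrivial direction handled by conjugation-invariance and the uniqueness of the maximal semistable subbundle from Proposition \ref{prop1}. The degree-preservation of the conjugation, which you flag as the main point to check, is exactly the step the paper treats as immediate, and your sketch of it (a conjugation-invariant Hermitian metric induced from a Euclidean metric on the real bundle) is the right justification.
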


\begin{proof}
Let $V$ be a flat real vector bundle over $M$. Let
$V_{\mathbb C}\,:=\, V\bigotimes_{\mathbb R}\mathbb C$ be the
flat complex vector bundle. We will show that $V$ is semistable
if and only if $V_{\mathbb C}$ is so.

If a flat subbundle $W\, \subset\, V$ violates the semistability
condition for $V$, then the flat subbundle $W\bigotimes_{\mathbb
R}\mathbb C\, \subset\, V_{\mathbb C}$ violates the semistability
condition for $V_{\mathbb C}$. Therefore, $V$ is semistable if
$V_{\mathbb C}$ is so.

To prove the converse, assume that
$V_{\mathbb C}$ is not semistable. Let $F\, \subset\,
V_{\mathbb C}$ be the maximal semistable subbundle of
$V_{\mathbb C}$, which is a proper subbundle because $V_{\mathbb C}$
is not semistable. From the uniqueness of $F$ it follows immediately
that the $\mathbb R$--linear conjugation automorphism of
$V_{\mathbb C}\, =\, V\bigotimes_{\mathbb R}\mathbb C$
defined by $v\otimes \lambda\, \longmapsto\, v\otimes
\overline{\lambda}$, where $v\,\in\, V$ and $\lambda
\,\in\, \mathbb C$, preserves the subbundle $F$. Hence $F$ is the
complexification of a flat subbundle $F'$ of $V$. This subbundle
$F'$ violates the semistability condition for $V$.
Therefore, $V_{\mathbb C}$ is semistable if and only if $V$ is so.

We apply the above observation to $V\,=\, \text{ad}(E_G)$.
Note that
\begin{equation}\label{e-l1}
\text{ad}(E_{G_{\mathbb C}})\,=\,
\text{ad}(E_G)\otimes_{\mathbb R}\mathbb C\, .
\end{equation}
In view of Corollary \ref{cor2}, the proof is complete.
\end{proof}

\section{The socle reduction}

Let $V\, \longrightarrow\, M$ be a flat semistable vector bundle;
it is allowed to be real or complex.
Let $F_1$ and $F_2$ be two flat subbundles of $V$ such that
both $F_1$ and $F_2$ are polystable, and
\begin{equation}\label{e10}
\mu_g(F_1)\,=\, \mu_g(F_2) \,=\, \mu_g(V)\, .
\end{equation}
Let
$$
F_1+ F_2\, \subset\, V
$$
be the flat subbundle of $V$ generated by $F_1$ and $F_2$.

\begin{proposition}\label{prop4}
The flat vector bundle $F_1+ F_2$ is polystable, and
$\mu_g(F_1+ F_2)\,=\, \mu_g(V)$.
\end{proposition}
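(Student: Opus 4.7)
The plan is to first establish that $F_1+F_2$ is semistable of slope $\mu_g(V)$ via the degree-additivity argument used in the proof of Proposition \ref{prop1}, and then to upgrade semistability to polystability by invoking the Hermitian--Einstein correspondence of Theorem \ref{thm0}.

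For the first step, I would consider the short exact sequence of flat vector bundles
$$
0\,\longrightarrow\, F_1\cap F_2\,\longrightarrow\, F_1\oplus F_2
\,\longrightarrow\, F_1+ F_2\,\longrightarrow\, 0\, .
$$
The semistability of $V$ gives $\mu_g(F_1\cap F_2),\,\mu_g(F_1+F_2)\,\le\,\mu_g(V)$, while \eqref{e10} gives $\mu_g(F_1\oplus F_2)\,=\,\mu_g(V)$; combining these with degree additivity along the exact sequence exactly as in the proof of Proposition \ref{prop1} forces
$$
\mu_g(F_1\cap F_2)\,=\,\mu_g(F_1+F_2)\,=\,\mu_g(V)
$$
and forces semistability of both $F_1\cap F_2$ and $F_1+F_2$.

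For the second step, since $F_1$ and $F_2$ are polystable with common slope $\mu_g(V)$, the direct sum $F_1\oplus F_2$ is polystable of the same slope, and Theorem \ref{thm0} produces a Hermitian--Einstein metric $h$ on it (one may take the direct sum of Hermitian--Einstein metrics on the two factors). The antidiagonal embedding $x\mapsto (x,-x)$ realizes $F_1\cap F_2$ as a flat subbundle of $F_1\oplus F_2$ whose slope equals that of the ambient Hermitian--Einstein bundle. The affine Gauduchon analogue of L\"ubke's rigidity lemma---that a flat subbundle of a Hermitian--Einstein bundle having the same slope must have vanishing second fundamental form, so that its orthogonal complement is again a flat subbundle---is part of the machinery of \cite{Lo}, and yields an orthogonal decomposition
$$
F_1\oplus F_2 \,=\, (F_1\cap F_2)\,\oplus\, (F_1\cap F_2)^{\perp}
$$
into flat subbundles, each inheriting a Hermitian--Einstein metric from $h$. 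The sum map $(a,b)\mapsto a+b$ restricts to a flat bundle isomorphism between $(F_1\cap F_2)^{\perp}$ and $F_1+F_2$, so $F_1+F_2$ inherits a Hermitian--Einstein metric and is polystable by Theorem \ref{thm0}.

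The main obstacle is the L\"ubke-type orthogonal-splitting step, which requires the integration-by-parts identity showing that a flat subbundle of matching slope inside a Hermitian--Einstein bundle must be parallel with respect to the Hermitian--Einstein connection. This identity is however precisely the kind of computation underlying the implication ``Hermitian--Einstein implies polystable'' already available in \cite{Lo}; granted this, everything else in the argument is degree arithmetic and elementary linear algebra.
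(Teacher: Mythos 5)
Your proposal is correct, and its first half (the exact sequence $0\to F_1\cap F_2\to F_1\oplus F_2\to F_1+F_2\to 0$, degree additivity, semistability of $V$, and the trivial case $F_1\cap F_2=0$) coincides with the paper's argument. Where you genuinely diverge is the splitting step. You embed $F_1\cap F_2$ antidiagonally into the Hermitian--Einstein bundle $F_1\oplus F_2$, invoke a L\"ubke-type rigidity statement (a flat subbundle of equal slope has vanishing second fundamental form) to get a flat orthogonal decomposition, and then transport the Hermitian--Einstein structure to $F_1+F_2$ through the sum map, so polystability comes out as ``flat-isomorphic to an HE bundle.'' The paper instead works inside $F_1$: it forms ${\mathcal W}={\mathcal H}om\bigl(\bigwedge^r(F_1\cap F_2),\bigwedge^r F_1\bigr)$, which is polystable of degree zero by Corollaries \ref{co2} and \ref{cor0}, and uses the precisely quotable result of \cite{Lo} that a flat section of a degree-zero Hermitian--Einstein bundle is parallel to conclude that the HE connection on $F_1$ preserves $F_1\cap F_2$; it then writes $F_1+F_2=F'\oplus F_2$ with $F'=(F_1\cap F_2)^\perp\subset F_1$, a direct sum of polystables of equal slope. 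The only soft spot in your route is the status of the rigidity lemma you describe as ``part of the machinery of \cite{Lo}'': it is true, and it is implicit in the proof there that Hermitian--Einstein implies polystable, but it is not a separately stated result --- which is presumably why the paper circumvents it with the determinant-line/Hom-bundle trick. If you want your variant to rest only on stated theorems, apply that same trick to your antidiagonal inclusion, i.e.\ to ${\mathcal H}om\bigl(\bigwedge^r(F_1\cap F_2),\bigwedge^r(F_1\oplus F_2)\bigr)$, which is again polystable of degree zero; parallelism of the induced flat section gives your orthogonal splitting, and the rest of your argument (flat complement, identification with $F_1+F_2$ via the sum map, matching Einstein constants since all slopes equal $\mu_g(V)$) goes through unchanged. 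What your version buys is a slightly cleaner endgame (no need to decompose $F_1$ and reassemble $F'\oplus F_2$); what the paper's version buys is that every analytic input is a stated theorem of \cite{Lo}.
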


\begin{proof}
{}From \eqref{e10},
\begin{equation}\label{e11}
\mu_g(F_1\oplus F_2)\,=\, \mu_g(V)\, .
\end{equation}
Consider the short exact sequence of flat vector bundles
\begin{equation}\label{f1}
0\,\longrightarrow\, F_1\cap F_2\,\longrightarrow\,F_1\oplus
F_2\,\longrightarrow\,F_1+F_2\,\longrightarrow\, 0\, .
\end{equation}

If $F_1\cap F_2\,=\, 0$, then $F_1+F_2\,=\, F_1\oplus F_2$, hence
in this case
$F_1+F_2$ is polystable, and $\mu_g(F_1+F_2)\,=\, \mu_g(V)$ from
\eqref{e11}. Therefore, the proposition is evident if
$F_1\cap F_2\,=\, 0$.

So assume that $\text{rank}(F_1\cap F_2)\, >\, 0$.

Since $V$ is semistable, and both $F_1\cap F_2$ and
$F_1+F_2$ are flat subbundles of $V$, we have
\begin{equation}\label{inq}
\mu_g(F_1\cap F_2)\, , \mu_g(F_1+ F_2)\, \leq\,
\mu_g(V)\, .
\end{equation}
{}From \eqref{f1},
$$
\mu_g(F_1\oplus F_2)\,=\, \frac{\mu_g(F_1\cap F_2)\cdot
\text{rank}(F_1\cap F_2) + \mu_g(F_1 +F_2)\cdot
\text{rank}(F_1+ F_2)}{\text{rank}(F_1\cap F_2)+
\text{rank}(F_1+ F_2)}\, .
$$
Combining this with \eqref{e11} and \eqref{inq},
\begin{equation}\label{e12}
\mu_g(F_1+ F_2)\, =\, \mu_g(F_1\cap F_2)\, =\,\mu_g(V)\, .
\end{equation}

Let $r$ be the rank of the flat vector bundle $F_1\cap F_2$;
recall that it is positive. Consider the vector bundle
\begin{equation}\label{W}
{\mathcal W}\, :=\, {\mathcal H}om(\bigwedge\nolimits^r (F_1\cap
F_2)\, , \bigwedge\nolimits^r F_1)\,=\, \bigwedge\nolimits^r
(F_1\cap F_2)^* \otimes \bigwedge\nolimits^r F_1\, .
\end{equation}
Note that the inclusion homomorphism $F_1\cap F_2\,\hookrightarrow
\, F_1$ defines a nonzero flat section
\begin{equation}\label{e13}
\eta\, \in\, H^0(M,\, {\mathcal W})\, .
\end{equation}

We have
$$
\mu_g({\mathcal W})\,= \, \mu_g(\bigwedge\nolimits^r F_1)
- \mu_g(\bigwedge\nolimits^r (F_1\cap F_2)) \,=\,
r\cdot \mu_g(F_1) - r\cdot \mu_g(F_1\cap F_2)\, .
$$
Hence $\mu_g({\mathcal W})\,= \, 0$ by \eqref{e10} and \eqref{e12}.
Hence, ${\rm deg}_g({\mathcal W})\,=\, 0$; also, from
Corollary \ref{cor0} we know that ${\mathcal W}$ is polystable
(note that $\bigwedge\nolimits^r(F_1\cap F_2)^*$ is a line bundle).
We recall from \cite{Lo} that given any flat vector
bundle $V$ on $M$ of degree zero equipped with a
Hermitian--Einstein connection
$\nabla_V$, any flat section of $V$ is flat with respect to
$\nabla_V$ (see Theorem 3 of \cite[p. 110]{Lo}).
Also, \cite[p. 102, Theorem 1]{Lo} and
\cite[p. 129, Corollary 33]{Lo} say that
any polystable vector bundle on $M$ admits a
Hermitian--Einstein connection. Hence the vector bundle
${\mathcal W}$ in \eqref{W} admits a Hermitian--Einstein connection,
and the section $\eta$ in \eqref{e13} is flat with respect to
the Hermitian--Einstein connection on ${\mathcal W}$.

Since $\eta$ is flat with respect to
the Hermitian--Einstein connection on ${\mathcal W}$,
it follows that
the Hermitian--Einstein connection on $F_1$ preserves
the subbundle $F_1\cap F_2\, \subset\, F_1$. Consequently,
$F_1\cap F_2$ is polystable \cite[p. 110, Theorem 4]{Lo}. This
also implies that the orthogonal complement of $F_1\cap F_2$
with respect to a Hermitian--Einstein metric on $F_1$
$$
F'\, :=\, (F_1\cap F_2)^\perp \, \subset\, F_1
$$
is preserved by the Hermitian--Einstein connection. Hence
$F'$ is polystable if $F'\,\not=\, 0$; note that
$\mu_g(F')\,=\, \mu_g(F_1)$ if $F'\,\not=\, 0$.

Since $F_1 +F_2\, =\, F'\oplus F_2$, we now conclude that
$F_1 +F_2$ is polystable, and $\mu_g(F_1+ F_2)\,=\, \mu_g(V)$.
\end{proof}

\begin{corollary}\label{cor3}
Let $V\, \longrightarrow\, M$ be a flat semistable vector bundle.
Then there is a unique maximal polystable flat subbundle
$F\, \subset\, V$ such that $\mu_g(F)\,=\, \mu_g(V)$.
\end{corollary}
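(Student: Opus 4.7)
The plan is to deduce Corollary \ref{cor3} directly from Proposition \ref{prop4} together with a maximal rank argument, exactly as one handles the socle in the complex setting.

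First I would verify that polystable flat subbundles of $V$ with slope $\mu_g(V)$ actually exist. Fix a Jordan--Hölder style filtration of flat subbundles of $V$ of the form \eqref{e1}, so that all successive quotients $V_i/V_{i-1}$ are stable. Since $V$ is semistable, the relation
\[
\mu_g(V)\,=\,\frac{\sum_{i=1}^n \mathrm{rank}(V_i/V_{i-1})\cdot\mu_g(V_i/V_{i-1})}{\mathrm{rank}(V)}
\]
combined with the inequalities $\mu_g(V_i/V_{i-1})\le\mu_g(V)$ (which hold because $V_i/V_{i-1}$ embeds as a flat quotient and pulls back to flat subbundles of $V/V_{i-1}$, and, equivalently, because the successive quotients make up a semistable bundle) forces $\mu_g(V_i/V_{i-1})=\mu_g(V)$ for every $i$. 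In particular $V_1$ is stable with slope $\mu_g(V)$, hence a polystable flat subbundle of the required slope.

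Next I would take, among all polystable flat subbundles $F\subset V$ with $\mu_g(F)=\mu_g(V)$, one of \emph{maximal rank}; such a choice exists because the rank is a positive integer bounded above by $\mathrm{rank}(V)$. I claim this $F$ is the desired unique maximal polystable flat subbundle. Indeed, given any other polystable flat subbundle $F'\subset V$ with $\mu_g(F')=\mu_g(V)$, Proposition \ref{prop4} applied to $F$ and $F'$ shows that $F+F'\subset V$ is a polystable flat subbundle with $\mu_g(F+F')=\mu_g(V)$. Since $F\subset F+F'$, the maximality of the rank of $F$ forces $F+F'=F$, hence $F'\subset F$. Thus $F$ contains every polystable flat subbundle of slope $\mu_g(V)$, which simultaneously gives existence, maximality, and uniqueness.

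There is no real obstacle: the only step that could conceivably fail is the existence of some polystable flat subbundle of slope $\mu_g(V)$, but this is handled by the Jordan--Hölder argument above. The heart of the corollary is entirely encoded in Proposition \ref{prop4}; the present statement is the standard closure--under--sum packaging of that proposition into a unique maximal object (the socle of $V$).
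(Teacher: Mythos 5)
Your core argument is correct and is essentially the paper's: by Proposition \ref{prop4} the family of polystable flat subbundles of slope $\mu_g(V)$ is closed under the operation $(F,F')\longmapsto F+F'$, and a member of maximal rank therefore absorbs every other member (the paper packages this by taking the flat subbundle generated by all of them); the step $F+F'=F\Rightarrow F'\subset F$ is sound because a flat subbundle of $F+F'$ of full rank is all of $F+F'$.

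The existence step, however, rests on a false claim. For an arbitrary filtration \eqref{e1} whose successive quotients are merely stable, the inequalities $\mu_g(V_i/V_{i-1})\,\le\,\mu_g(V)$ do \emph{not} follow from semistability of $V$: semistability bounds slopes of flat subbundles of $V$ from above and slopes of flat quotients of $V$ from below, but it says nothing about flat subbundles of the quotients $V/V_{i-1}$, which is what the intermediate $V_i/V_{i-1}$ are. Indeed, the definition of the filtration \eqref{e1} imposes no slope condition, so nothing prevents a step with $\mu_g(V_i)\,<\,\mu_g(V)$; whenever that happens, rank--degree bookkeeping forces some later successive quotient to have slope strictly \emph{greater} than $\mu_g(V)$ (for instance, refining $0\subset W\subset V$ with $\mu_g(W)<\mu_g(V)$ to a stable-quotient filtration already does this), so your forcing of equalities, and hence the conclusion that $V_1$ is stable of slope $\mu_g(V)$, breaks down. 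The filtration with stable quotients all of slope $\mu_g(V)$ used elsewhere in the paper (proof of Proposition \ref{prop2}) is a special object whose existence itself needs an argument. The repair is short: among all flat subbundles $W\subset V$ with $\mu_g(W)=\mu_g(V)$ (a nonempty set, since $V$ itself qualifies) choose one of minimal rank; as in the proof of Lemma \ref{lem1}, such a minimal-rank subbundle is automatically stable, hence polystable of slope $\mu_g(V)$. With that substitution your proof is complete and agrees in substance with the paper's.
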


\begin{proof}
In view of Proposition \ref{prop4}, the flat subbundle $F\, \subset
\, V$ generated by all flat polystable subbundles $E\, \subset
\, V$ with $\mu_g(E)\,=\, \mu_g(V)$ satisfies the conditions
in the corollary.
\end{proof}

The flat polystable subbundle $F\, \subset\, V$ in Corollary
\ref{cor3} is called the \textit{socle} of $V$.

If $F$ is properly contained in $V$, then we note
that $V/F$ is semistable, and $\mu_g(V/F)\,=\,\mu_g(V)$.
Therefore, Corollary \ref{cor3} gives the following:

\begin{corollary}\label{cor4}
Let $V\, \longrightarrow\, M$ be a flat semistable vector bundle.
Then there is a filtration of flat subbundles
$$
0\,=\, F_0\, \subset\, F_1\, \subset\, \cdots
\, \subset\, F_{n-1} \, \subset\, F_n\, =\, V
$$
such that for each $i\, \in\, [1\, ,n]$, the flat
subbundle $F_i/F_{i-1} \, \subset\, V/F_{i-1}$
is the socle of the flat semistable vector bundle $V/F_{i-1}$.
\end{corollary}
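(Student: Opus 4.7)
The plan is to construct the filtration iteratively, at each stage peeling off the socle of the current quotient. Given the preceding results, the only substantive content is to verify the parenthetical assertion made just before the corollary: if $F \subsetneq V$ is the socle, then $V/F$ is semistable with $\mu_g(V/F) = \mu_g(V)$. Once this is in hand, the corollary follows by a straightforward induction on rank using Corollary \ref{cor3}.

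For the slope claim, I would use additivity of degree in the short exact sequence $0 \to F \to V \to V/F \to 0$: since $\mu_g(F) = \mu_g(V)$ and $\mathrm{deg}_g(V) = \mathrm{deg}_g(F) + \mathrm{deg}_g(V/F)$, an elementary computation gives $\mu_g(V/F) = \mu_g(V)$. For semistability of $V/F$, let $W \subset V/F$ be any flat subbundle, and let $\widetilde{W} \subset V$ be its preimage under the quotient projection $V \to V/F$, which is a flat subbundle containing $F$. Semistability of $V$ gives $\mu_g(\widetilde{W}) \le \mu_g(V)$, and since $0 \to F \to \widetilde{W} \to W \to 0$ is exact with $\mu_g(F) = \mu_g(V)$, a direct slope computation (identical to the one appearing in the proof of Proposition \ref{prop4}) forces $\mu_g(W) \le \mu_g(V) = \mu_g(V/F)$.

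The induction then runs as follows. Set $F_0 = 0$. Supposing $F_{i-1} \subsetneq V$ has been defined with $V/F_{i-1}$ semistable of slope $\mu_g(V)$, apply Corollary \ref{cor3} to $V/F_{i-1}$ to obtain its socle $S_i \subset V/F_{i-1}$, and let $F_i \subset V$ be the preimage of $S_i$ under the quotient map. Then $F_i/F_{i-1} = S_i$ is the socle of $V/F_{i-1}$ by construction, and $F_{i-1} \subsetneq F_i$ since the socle has positive rank. If $F_i \subsetneq V$, the preliminary observation applied to the socle $S_i$ of $V/F_{i-1}$ shows that $(V/F_{i-1})/S_i = V/F_i$ is again semistable of slope $\mu_g(V)$, so the induction continues.

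Termination is immediate from $\mathrm{rank}(F_i) > \mathrm{rank}(F_{i-1})$ and $\mathrm{rank}(F_i) \le \mathrm{rank}(V)$; the process stops at some $n \le \mathrm{rank}(V)$ with $F_n = V$. The main obstacle is essentially administrative, namely the slope/semistability verification for $V/F$; there is no serious analytic or algebraic difficulty beyond what has already been established in Corollary \ref{cor3} and Proposition \ref{prop4}.
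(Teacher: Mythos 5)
Your proposal is correct and follows exactly the route the paper takes: the paper simply notes, just before the corollary, that for $F\subsetneq V$ the socle quotient $V/F$ is again semistable of slope $\mu_g(V)$, and then iterates Corollary \ref{cor3}; your slope and semistability verifications (via degree additivity and pulling back a subbundle of $V/F$ to a subbundle of $V$ containing $F$) correctly fill in that note. Nothing further is needed.
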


\subsection{Socle reduction of a principal bundle}

Let $G$ be as before.
Let $E_G\, \longrightarrow\, M$ be a semistable principal $G$--bundle.

A \textit{socle reduction} of $E_G$ is a pair
$(Q_0\, ,E_{Q_0})$, where
\begin{itemize}
\item $Q_0\, \subset\, H$
is maximal among all the parabolic
subgroups $Q$ of $G$ such that $E_G$ admits an admissible
reduction of structure group
$$
E_Q\, \subset\, E_G
$$
for which the corresponding principal $Q/R_u(Q)$--bundle
$E_Q/R_u(Q)\, \longrightarrow\, M$ is polystable, where
$R_u(Q)$ is the unipotent radical of $Q$, and

\item $E_{Q_0}\, \subset\, E_G$ is an admissible reduction
of structure group of $E_G$ to $Q_0$ such that the associated
principal $Q_0/R_u(Q_0)$--bundle $E_{Q_0}/R_u(Q_0)$
is polystable.
\end{itemize}
(Admissible reductions were defined in \eqref{admiss}.)

\begin{proposition}\label{prop5}
Let $E_G\, \longrightarrow\, M$ be a semistable principal
$G$--bundle. Then $E_G$ admits a socle reduction. If
$(Q_1\, ,E_{Q_1})$ and $(Q_2\, ,E_{Q_2})$ are two socle
reductions of $E_G$, then there is an element $g\,\in\, G$ such
that $Q_2\,=\, g^{-1}Q_1g$ and $E_{Q_2}\,=\, E_{Q_1}g$.
\end{proposition}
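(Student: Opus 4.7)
The plan is to imitate the proof of Proposition \ref{prop3}, with Corollary \ref{cor3} (the existence and uniqueness of the socle of a flat semistable vector bundle) playing the role of Theorem \ref{thm-1}. The central object is again the adjoint bundle $\text{ad}(E_G)$: by Corollary \ref{cor2}, it is flat semistable, and its degree vanishes because the adjoint representation factors through the semisimple quotient $G/Z(G)$, so that $\det(\text{ad}(E_G))$ is trivial as a flat line bundle.

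First I would apply Corollary \ref{cor3} to $\text{ad}(E_G)$ to extract the socle $S\subset \text{ad}(E_G)$: the unique maximal flat polystable subbundle of slope zero. Next I would verify that $S$ is closed under the Lie bracket. The bracket is a flat homomorphism $S\otimes S\to \text{ad}(E_G)$, and by Corollary \ref{co2} the source is polystable of slope zero. Its image is a flat subbundle of $\text{ad}(E_G)$ that is polystable of slope zero (as in the proof of Proposition \ref{prop4}, a flat quotient of a polystable bundle that also embeds in a semistable bundle of the same slope inherits polystability by splitting off via a Hermitian--Einstein metric); by maximality of $S$, the image lies inside $S$. Hence $S$ is a flat bundle of Lie subalgebras of $\text{ad}(E_G)$.

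The main obstacle is then the Lie-theoretic step showing that at each fiber $x$, the subalgebra $S_x\subset \text{Lie}(G)$ is a Levi subalgebra of some parabolic subalgebra of $\text{Lie}(G)$. This is the analog, for the socle, of the step in Proposition \ref{prop3} identifying a parabolic subalgebra inside $\text{ad}(E_G)_x$, and it can be handled along the lines of \cite{AAB} and \cite{BH}, using the reductivity of $S_x$ that is forced by the Hermitian--Einstein metric on $S$ respecting the bracket. Granting this, fix a parabolic $Q_0\subset G$ together with a Levi subgroup $L(Q_0)$ such that $\text{Lie}(L(Q_0))$ lies in the conjugacy class of $S_x$; then $S$ yields a flat reduction $E_{L(Q_0)}\subset E_G$ compatible with the connection, and extension of structure group produces $E_{Q_0}\subset E_G$. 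Polystability of $E_{Q_0}/R_u(Q_0)\cong E_{L(Q_0)}$ follows from polystability of $S$, while admissibility of the reduction follows because the line bundles associated to characters of $Q_0$ trivial on $Z(G)$ sit inside tensor powers of $S$, which have degree zero by Corollary \ref{co2}.

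For maximality of $Q_0$ and uniqueness up to conjugation, any pair $(Q',E_{Q'})$ meeting the defining conditions yields a flat polystable subbundle $\text{ad}(E_{Q'}/R_u(Q'))\subset \text{ad}(E_G)$ of slope zero, hence contained in $S$; a dimension comparison forces $Q'\subset Q_0$, giving maximality. If $(Q_1,E_{Q_1})$ and $(Q_2,E_{Q_2})$ are two socle reductions, both produce exactly the same flat subbundle $S\subset \text{ad}(E_G)$ by the uniqueness clause of Corollary \ref{cor3}, and the standard conjugation principle for parabolic subgroups sharing a Levi subalgebra then furnishes $g\in G$ with $Q_2=g^{-1}Q_1g$ and $E_{Q_2}=E_{Q_1}g$.
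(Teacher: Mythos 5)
Your bracket--closure argument for the socle $S\subset \text{ad}(E_G)$ is fine, but the step you yourself flag as ``the main obstacle'' --- that each fiber $S_x$ is a Levi subalgebra of a parabolic subalgebra of $\text{Lie}(G)$ --- is not merely unproven, it is false, and the whole construction built on it collapses. Take $G\,=\,\text{GL}_2$ and let $E_G$ be the frame bundle of a flat bundle $V$ given by a nontrivial unipotent representation of $\pi_1(M)$ (a nonsplit flat extension of the trivial flat line bundle by itself, e.g.\ on a flat affine torus); $V$ is semistable but not polystable. Flat subbundles of $\text{ad}(E_G)\,=\,{\mathcal E}nd(V)$ correspond to $\text{ad}\circ\rho$--invariant subspaces of $\mathfrak{gl}_2$, and since the monodromy is unipotent, every flat line subbundle is spanned by an invariant vector; the invariant vectors are exactly $\text{span}(\mathrm{Id}\, ,N)$, where $N$ is the nilpotent flat endomorphism of $V$. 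One checks that no invariant subspace of dimension $\geq 3$ yields a polystable flat subbundle, so the socle is $S\,=\,\text{span}(\mathrm{Id}\, ,N)$: a rank--two abelian subalgebra bundle containing a nonzero nilpotent. This is not conjugate to the Lie algebra of any Levi subgroup of a parabolic of $\text{GL}_2$ (those are $\mathfrak{gl}_2$ itself and the diagonal Cartan), and no Hermitian--Einstein or bracket--compatibility argument can rescue this: $S$ carries a perfectly good Hermitian--Einstein metric compatible with its (zero) bracket. Consequently your step 4 (choosing $Q_0$ with $\text{Lie}(L(Q_0))$ in the conjugacy class of $S_x$) has no input, and the admissibility, maximality and uniqueness arguments, which all presuppose that $S$ is the adjoint bundle of a Levi reduction, fail with it. In this example the true socle reduction is to the Borel subgroup, via the flat line $\mathcal{O}\subset V$.

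The correct object is not the socle itself but the \emph{middle term} of the iterated socle filtration of Corollary \ref{cor4}: the paper shows (using Corollary \ref{co2}, following \cite[p.~218]{AB}) that this filtration $0=F_0\subset F_1\subset\cdots\subset F_n=\text{ad}(E_G)$ has odd length, and that $F_{(n+1)/2}$ is fiberwise a \emph{parabolic} subalgebra, hence the adjoint bundle of a reduction $E_{Q_0}\subset E_G$ to a parabolic subgroup $Q_0$; the verification that $(Q_0\, ,E_{Q_0})$ is a socle reduction and the uniqueness up to conjugation are then quoted from \cite{AB}. In the example above this works out correctly: the filtration is $\text{span}(\mathrm{Id},N)\subset\mathfrak{b}\subset\mathfrak{gl}_2$ (with $\mathfrak{b}$ the upper--triangular subalgebra bundle), $n=3$, and the middle term is the Borel subalgebra bundle, recovering the Borel reduction whose Levi--quotient bundle is polystable. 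If you want to salvage your outline, you must replace the socle $S$ by $F_{(n+1)/2}$ and argue parabolicity (and the oddness of $n$) as in \cite{AB}, rather than attempting a Levi--type identification of $S$.
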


\begin{proof}
{}From Corollary \ref{cor2} we know that the flat adjoint
bundle ${\rm ad}(E_G)$ is semistable. Let
\begin{equation}\label{er}
0\,=\, F_0\, \subset\, F_1\, \subset\, \cdots\,
\subset\, F_{n-1} \, \subset\, F_n\, =\, {\rm ad}(E_G)
\end{equation}
be the filtration constructed in Corollary \ref{cor4}. Using
Corollary \ref{co2} it can be shown that $n$ is an odd integer;
see \cite[p. 218]{AB} for the details.
The flat subbundle
\begin{equation}\label{el2}
F_{(n+1)/2}\, \subset\, \text{ad}(E_G)
\end{equation}
in \eqref{er}
is the adjoint vector bundle of a reduction of structure group of
$E_G$ to a parabolic subgroup. After we fix a parabolic
subgroup $Q_0$ of $G$ in the conjugacy class of parabolic subgroups
of $G$ defined by the subalgebra $(E_{(n+1)/2})_x\, \subset\,
\text{ad}(E_G)_x$, where $x\, \in\, M$, we get a reduction of
structure group $E_{Q_0}\, \subset\, E_G$ to $Q_0$
compatible with the connection on $E_G$.
It can be shown that this pair $(Q_0\, ,E_{Q_0})$ is a socle
reduction of $E_G$ \cite{AB}. The uniqueness statement is also
proved in \cite{AB}.
\end{proof}

{}From Proposition \ref{prop5} and its proof we have the following
corollary:

\begin{corollary}\label{cor5}
Let $E_G$ be a flat principal $G$--bundle over $M$. Then
$E_G$ is polystable if and only if the flat vector bundle ${\rm
ad}(E_G)$ is polystable.
\end{corollary}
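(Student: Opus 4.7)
The plan is to extract both implications from Proposition \ref{prop5} and its proof, with Corollary \ref{cor2} providing the equivalence at the semistable level and Corollary \ref{cor4} providing the socle filtration of $\text{ad}(E_G)$ used in the construction.

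For the direction $\text{ad}(E_G)$ polystable $\Rightarrow$ $E_G$ polystable, Corollary \ref{cor2} gives that $E_G$ is semistable, so Proposition \ref{prop5} supplies a socle reduction $(Q_0, E_{Q_0})$. The proof of that proposition identifies $Q_0$ with the parabolic whose Lie algebra corresponds to the distinguished subbundle $F_{(n+1)/2}$ in the socle filtration of $\text{ad}(E_G)$. Polystability of $\text{ad}(E_G)$ means this filtration collapses to the single step $F_1 = \text{ad}(E_G)$ (so $n=1$); hence $F_{(n+1)/2} = \text{ad}(E_G)$, forcing $Q_0 = G$. Since $R_u(G)$ is trivial, the socle reduction condition that $E_{Q_0}/R_u(Q_0)$ be polystable reads directly as $E_G$ being polystable.

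For the converse, assume $E_G$ is polystable, so $E_G$ is semistable and $\text{ad}(E_G)$ is semistable by Corollary \ref{cor2}. If $E_G$ is stable, no proper admissible reduction $E_Q \subset E_G$ can exist: a strictly anti-dominant character $\lambda$ of $Q$ is trivial on the connected component of the centre of $G$, so admissibility \eqref{admiss} would give $\deg_g(E_Q(\lambda)) = 0$, contradicting the strict positivity \eqref{es} from stability. Hence the socle reduction of $E_G$ has $Q_0 = G$, which by the construction in Proposition \ref{prop5} forces the socle filtration of $\text{ad}(E_G)$ to have length one, so $\text{ad}(E_G)$ equals its own socle and is polystable. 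When $E_G$ is polystable but not stable, the defining Levi reduction $E_{L(Q)}$ with $E_{L(Q)}$ stable yields $\text{ad}(E_G) = E_{L(Q)} \times^{L(Q)} \mathfrak{g}$; decomposing $\mathfrak{g}$ into irreducible $L(Q)$-submodules and applying the stable case (inductively, to $E_{L(Q)}$ as an $L(Q)$-bundle) realises each associated bundle as polystable, while admissibility \eqref{admiss} forces all their slopes to vanish, so Corollary \ref{co2} delivers polystability of $\text{ad}(E_G)$.

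The main obstacle is the non-stable polystable case of the converse: one must verify that the admissibility of the extension $E_Q$ is strong enough to make each irreducible $L(Q)$-isotype of $\mathfrak{g}$ contribute a polystable summand of slope zero to $\text{ad}(E_G)$. This is the representation-theoretic bookkeeping analogous to what appears in \cite{AB} and \cite{AAB} for the complex case, and it should transfer once the existence and uniqueness of the socle reduction in Proposition \ref{prop5} is in hand.
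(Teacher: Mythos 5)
Your ``${\rm ad}(E_G)$ polystable $\Rightarrow E_G$ polystable'' direction is the paper's argument, and your stable case of the converse is workable (modulo the small point that a strictly anti--dominant character is only required to be trivial on the identity component of the centre, so one should pass to a suitable positive power before invoking \eqref{admiss}). The genuine gap is in the non--stable polystable case of ``$E_G$ polystable $\Rightarrow {\rm ad}(E_G)$ polystable''. There you assert that ``applying the stable case (inductively, to $E_{L(Q)}$)'' makes each associated bundle $E_{L(Q)}\times^{L(Q)}{\mathfrak g}_i$, for ${\mathfrak g}_i\subset{\mathfrak g}$ an irreducible $L(Q)$--submodule, polystable. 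But the stable case you have proved only yields polystability of ${\rm ad}(E_{L(Q)})$, i.e.\ of the associated bundle for the $L(Q)$--module ${\rm Lie}(L(Q))$; it says nothing about associated bundles for the other irreducible summands of ${\mathfrak g}$ (the pieces coming from the nilradical of ${\mathfrak q}$ and from ${\mathfrak g}/{\mathfrak q}$). Upgrading ``stable principal bundle'' to ``every associated bundle via an irreducible representation with the right central character is polystable'' is essentially the content of the Hermitian--Einstein correspondence for principal bundles (Theorem \ref{thm-2}), which in this paper comes \emph{after} Corollary \ref{cor5} and uses both of its directions, or of Ramanan--Ramanathan type results that are not established here; so, as written, this step is either unproven or circular, and your closing paragraph concedes exactly this point. (The slope computation you indicate is fine: each $\det{\mathfrak g}_i$ gives a character of $Q$ trivial on $Z_G$, so admissibility forces the slopes to vanish; polystability of the summands is what is missing.)

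The paper closes this direction without any case split and without decomposing ${\mathfrak g}$. If $E_G$ is polystable, then $Q=G$ itself admits an admissible reduction compatible with the connection, namely $E_G$ (every character of $G$ trivial on $Z_G$ is trivial, $G/Z_G$ being semisimple, so \eqref{admiss} is vacuous), and its Levi quotient bundle is $E_G$, which is polystable; by the maximality clause in the definition of a socle reduction, no socle reduction of $E_G$ can then be to a proper parabolic subgroup. On the other hand, if ${\rm ad}(E_G)$ were not polystable, the construction in Proposition \ref{prop5} (the socle filtration of ${\rm ad}(E_G)$, whose middle term $F_{(n+1)/2}$ is then a proper subbundle) produces a socle reduction to a proper parabolic $Q_0$ --- a contradiction. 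Replacing your non--stable case by this maximality argument closes the gap, and it also subsumes your stable case.
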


\begin{proof}
First we assume that $E_G$ is polystable. Then
the flat vector bundle ${\rm ad}(E_G)$ is semistable by
Corollary \ref{cor2}. If ${\rm ad}(E_G)$ is not polystable,
and $(Q_0\, ,E_{Q_0})$ is a socle
reduction of $E_G$, then $Q_0$ is a proper parabolic subgroup
of $G$. Therefore, $E_G$ is not polystable, which contradicts
the assumption. Hence ${\rm ad}(E_G)$ is polystable.

To prove the converse, assume that ${\rm ad}(E_G)$ is polystable.
Then the principal $G$--bundle $E_G$ is semistable (see Corollary
\ref{cor2}). Consider the socle of $E_G$. Since ${\rm ad}(E_G)$ is
polystable, we have $E_{(n+1)/2}\, =\, \text{ad}(E_G)$ (see
\eqref{el2}). Hence $(G\, ,E_G)$ is the socle of $E_G$.
Therefore, from the definition of a socle we conclude that
$E_G$ is polystable.
\end{proof}

\begin{corollary}\label{cor-n2}
Assume that $G$ is the fixed point locus of a split real form
on $G_{\mathbb C}$. Let $E_G$ be a flat principal $G$--bundle
over $M$. Let $E_{G_{\mathbb C}}$
be the flat principal $G_{\mathbb C}$--bundle
over $M$ obtained by extending the structure group
of $E_G$ using the inclusion of $G$ in $G_{\mathbb C}$.
The principal $G$--bundle $E_G$ is polystable if and only
if the principal $G_{\mathbb C}$--bundle $E_{G_{\mathbb C}}$ is so.
\end{corollary}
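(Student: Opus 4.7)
The plan is to parallel the argument of Corollary~\ref{cor-n1}, replacing the semistability criterion of Corollary~\ref{cor2} by the polystability criterion of Corollary~\ref{cor5}. Since $\text{ad}(E_{G_{\mathbb C}})\,=\,\text{ad}(E_G)\otimes_{\mathbb R}\mathbb C$ by~\eqref{e-l1}, Corollary~\ref{cor5} will reduce the statement to the following vector-bundle claim: a flat real vector bundle $V\,\longrightarrow\,M$ is polystable if and only if $V_{\mathbb C}\,:=\,V\otimes_{\mathbb R}\mathbb C$ is polystable as a complex flat vector bundle.

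For the forward direction I would write $V\,=\,\bigoplus W_i$ as a direct sum of flat real stable bundles of a common slope. By Theorem~\ref{thm0} (in the real form of \cite[Corollary~33]{Lo}), each $W_i$ admits a real Hermitian--Einstein metric, whose complexification is a Hermitian--Einstein metric on $(W_i)_{\mathbb C}$. Hence each $(W_i)_{\mathbb C}$ is polystable by \cite[Theorem~4]{Lo}, and a direct sum of polystable flat bundles of a common slope being polystable, so is $V_{\mathbb C}$.

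For the converse I would rely on the uniqueness clause of Theorem~\ref{thm0}. Assuming $V_{\mathbb C}$ polystable, let $\nabla$ be the unique flat Hermitian--Einstein connection on $V_{\mathbb C}$, induced by a metric $h$ with Einstein factor $c$. The $\mathbb R$--linear conjugation $\sigma\,:\,V_{\mathbb C}\,\longrightarrow\,V_{\mathbb C}$ is an automorphism of the flat bundle, and the pulled-back Hermitian metric $(\sigma^*h)(v,w)\,:=\,\overline{h(\sigma v,\sigma w)}$ has Chern connection $\sigma_*\nabla\,:=\,\sigma\circ\nabla\circ\sigma^{-1}$. Because $\sigma$ is $\mathbb C$--anti-linear while $\omega_g$ and $\nu$ are real, the Einstein factor of $\sigma^*h$ comes out to $\overline{c}$, so $\sigma_*\nabla$ is again a Hermitian--Einstein connection. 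Uniqueness then forces $\sigma_*\nabla\,=\,\nabla$, so $c\,=\,\overline{c}\,\in\,\mathbb R$ and $\nabla$ preserves the $\sigma$--fixed real subbundle $V\,\subset\,V_{\mathbb C}$. Its restriction to $V$ is a real Hermitian--Einstein connection, and a final appeal to Theorem~\ref{thm0} shows that $V$ is polystable.

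The main obstacle I expect is the verification that conjugation by the anti-linear flat involution $\sigma$ preserves the Hermitian--Einstein condition (with the Einstein factor being complex-conjugated). Once this is checked carefully in the affine Dolbeault set-up of \cite{Lo}, the uniqueness clause of Theorem~\ref{thm0} drives the rest of the argument, exactly mirroring how uniqueness of the maximal semistable subbundle was used in Corollary~\ref{cor-n1}.
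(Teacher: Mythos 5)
Your proposal is correct, and its skeleton matches the paper's: both reduce the statement via Corollary \ref{cor5} to the claim that a flat real vector bundle $V$ (here $V=\mathrm{ad}(E_G)$) is polystable if and only if $V_{\mathbb C}=V\otimes_{\mathbb R}{\mathbb C}=\mathrm{ad}(E_{G_{\mathbb C}})$ is. The difference is in how that vector-bundle equivalence is handled. The paper disposes of it in one line, citing the decomposition $\mathrm{ad}(E_G)\otimes_{\mathbb R}{\mathbb C}=\mathrm{ad}(E_G)\oplus\sqrt{-1}\,\mathrm{ad}(E_G)$, whereas you prove it analytically: the forward direction by complexifying the Hermitian--Einstein metrics of the stable real summands and invoking \cite[Theorem 4]{Lo}, and the converse by the uniqueness clause of Theorem \ref{thm0} together with invariance under the anti-linear flat conjugation $\sigma$. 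This is genuinely more detailed than what is printed (the paper's one-liner silently conflates polystability of the complex bundle with that of its underlying real decomposition), and your conjugation-plus-uniqueness device is exactly the descent trick the paper itself uses later, in the proof of Theorem \ref{thm-2} and in Corollary \ref{cor-n1}; so your argument buys rigor at the cost of invoking the analytic existence/uniqueness theorem, while the paper's route stays purely bundle-theoretic (modulo the glossed step).

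One small point to patch in your converse: uniqueness gives $\sigma_*\nabla=\nabla$, i.e.\ invariance of the \emph{connection}, not of the metric $h$, and to restrict to a genuine real Hermitian--Einstein structure on $V$ you also want a $\sigma$-invariant metric. This is easily arranged: $h$ and $\sigma^*h$ induce the same extended connection $\nabla$, hence differ by a $\nabla$-parallel positive self-adjoint automorphism, so $h+\sigma^*h$ is again Hermitian--Einstein with connection $\nabla$ and is conjugation-invariant; its restriction to $V$ then gives the real Hermitian--Einstein structure, and Theorem \ref{thm0} (the real case, \cite[Corollary 33]{Lo} together with \cite[Theorem 4]{Lo}) yields polystability of $V$. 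With that detail added, your argument is complete.
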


\begin{proof}
We note that $E_G$ is polystable if and only if
$\text{ad}(E_G)$ is polystable by Corollary \ref{cor5}. Since
$$
\text{ad}(E_{G_{\mathbb C}})\, =\, \text{ad}(E_G)\otimes_{\mathbb R}
{\mathbb C}\, = \, \text{ad}(E_G)\oplus \sqrt{-1}\cdot
\text{ad}(E_G)\, ,
$$
it follows that $\text{ad}(E_G)$ is polystable if and only if
$\text{ad}(E_{G_{\mathbb C}})$ is polystable. From
Corollary \ref{cor5}, the adjoint
vector bundle $\text{ad}(E_{G_{\mathbb C}})$
is polystable if and only if $E_{G_{\mathbb C}}$ is polystable.
\end{proof}

\section{Hermitian--Einstein connection on stable bundles
and Bogomolov inequality}

\subsection{Hermitian--Einstein connection and stable principal
bundles}

Fix a maximal compact subgroup
$$
K\, \subset\, G
$$
of the reductive group $G$. Let $E_G$ be a flat principal $G$--bundle
over $M$. A \textit{Hermitian structure} on $E_G$ is a $C^\infty$
reduction of structure group
$$
E_K\, \subset\, E_G\, .
$$

Recall that $G$ is either the fixed point locus of a split
real form on a complex reductive group $G_{\mathbb C}$ or $G$ is
complex reductive. In the second case, by $G_{\mathbb C}$ we will
denote $G$ itself; this is for notational convenience.

Given a flat
principal $G$--bundle $E_G$ over $M$, we get a holomorphic
principal $G_{\mathbb C}$--bundle $E_{G_{\mathbb C}}$
over $M_{\mathbb C}$ (the total
space of $TM$); see \cite[p. 102]{Lo}.

Given a Hermitian structure on $E_G$, there is a naturally
associated connection on the principal $G_{\mathbb C}$--bundle
$E_{G_{\mathbb C}}$ over $M_{\mathbb C}$
\cite[p. 106, Lemma 1]{Lo}; although \cite[Lemma 1]{Lo} is
only for vector bundles, the proof for principal bundles
is identical.

Any element $z$ of the center of the Lie algebra
${\mathfrak g}\, =\, \text{Lie}(G)$ defines a flat
section of $\text{ad}(E_G)$, because $z$ is fixed
by the adjoint action of $G$ on ${\mathfrak g}$; this section of
$\text{ad}(E_G)$ given by $z$ will be denoted by $\underline{z}$.

Let $E_K\, \subset\, E_G$ be a Hermitian structure on $E_G$.
Let $\nabla$ be the corresponding connection on $E_{G_{\mathbb C}}$.
The curvature of $\nabla$ will be denoted by $K(\nabla)$. So
$K(\nabla)$ is a smooth $(1\, ,1)$--form on $M_{\mathbb C}$
with values in the adjoint vector bundle $\text{ad}
(E_{G_{\mathbb C}})$. Contracting it using the metric $g$, we get
a smooth section $\Lambda_g K(\nabla)$ of $\text{ad}(E_G)$.
The Hermitian structure $E_K$ is called \textit{Hermitian--Einstein}
if there is an element $z$ in the center of the Lie algebra
$\mathfrak g$ such that
$$
\Lambda_g K(\nabla)\, =\, \underline{z}\, .
$$

If $E_K$ is a Hermitian--Einstein structure, then the
corresponding connection
$\nabla$ is called a \textit{Hermitian--Einstein connection}.

\begin{theorem}\label{thm-2}
A flat principal $G$--bundle $E_G\, \longrightarrow\, M$
admits a Hermitian--Einstein structure if and only if $E_G$
is polystable. A polystable flat principal $G$--bundle admits a
unique Hermitian--Einstein connection.
\end{theorem}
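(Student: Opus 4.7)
The plan is to reduce Theorem \ref{thm-2} to the vector bundle version (Theorem \ref{thm0}) by passing through the adjoint bundle via Corollary \ref{cor5}, and to handle the split real case via Corollary \ref{cor-n2}. The sufficiency direction is further reduced to the stable case using the polystability definition of Section \ref{sec.s}, and the stable case is then attacked via a faithful representation.

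For \emph{necessity}, assume $E_K \subset E_G$ is a Hermitian--Einstein reduction with $\Lambda_g K(\nabla) = \underline{z}$, where $z$ lies in the center of $\mathfrak{g}$. The induced connection on $\text{ad}(E_G)$ has contracted curvature $\text{ad}(z) = 0$, so it is Hermitian--Einstein with Einstein constant zero. Theorem \ref{thm0} then implies that $\text{ad}(E_G)$ is polystable, and Corollary \ref{cor5} yields polystability of $E_G$.

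For \emph{sufficiency}, if $G$ is the fixed locus of a split real form on $G_{\mathbb{C}}$, Corollary \ref{cor-n2} identifies polystability of $E_G$ with polystability of $E_{G_{\mathbb{C}}}$; a Hermitian--Einstein structure on $E_{G_{\mathbb{C}}}$, being unique (proved for the complex case below), is invariant under the anti-holomorphic involution defining $G$ and descends to $E_G$. So we may assume $G$ is complex reductive. If $E_G$ is polystable, by definition there is a parabolic $Q$, a Levi subgroup $L(Q) \subset Q$, and a reduction $E_{L(Q)} \subset E_G$ compatible with the flat connection such that $E_{L(Q)}$ is stable and the associated reduction $E_Q \subset E_G$ is admissible. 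Since $K \cap L(Q)$ is a maximal compact subgroup of $L(Q)$, any reduction of $E_{L(Q)}$ to $K \cap L(Q)$ induces a reduction of $E_G$ to $K$; admissibility of $E_Q$ forces the contracted curvature of the resulting connection on $E_G$ to be central. This reduces the existence claim to the case where $E_G$ is itself stable.

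For the stable case, fix a faithful representation $\rho : G \hookrightarrow GL(V)$. Using Proposition \ref{prop2}, Corollary \ref{co2}, and Corollary \ref{cor0}, together with $G$-isotypic decompositions in tensor powers of $V$, stability of $E_G$ implies polystability of the flat vector bundle $E_G(V)$: a destabilizing flat subbundle of $E_G(V)$ would yield a destabilizing parabolic reduction of $E_G$, contradicting stability. Theorem \ref{thm0} provides a Hermitian--Einstein connection on $E_G(V)$. Realizing $G \subset GL(V)$ as the common stabilizer of a family of Chevalley tensors in tensor powers of $V$, each such tensor is a flat section of a degree-zero flat polystable bundle and is therefore parallel for the Hermitian--Einstein connection by \cite[p.~110, Theorem 3]{Lo}. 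Consequently, the Hermitian--Einstein connection preserves a reduction $E_K \subset E_G$, giving the desired Hermitian--Einstein structure. Uniqueness follows because two Hermitian--Einstein reductions would differ by an automorphism of $E_G$ whose induced action on $\text{ad}(E_G)$ is parallel for the unique Hermitian--Einstein connection there, forcing it to be central. The main technical obstacle is precisely this descent from $E_G(V)$ back to $E_G$, which relies crucially on the degree-zero vanishing theorem of \cite{Lo}.
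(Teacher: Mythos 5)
Your necessity direction and your treatment of the split real form coincide with the paper's argument (Hermitian--Einstein structure gives a Hermitian--Einstein metric on $\mathrm{ad}(E_G)$, hence polystability via Corollary \ref{cor5}; uniqueness plus conjugation-invariance handles the descent from $G_{\mathbb C}$ to $G$). The sufficiency direction, however, departs from the paper and contains a genuine gap at its central step: the claim that stability of $E_G$ implies polystability of $E_G(V)$ for a \emph{faithful} representation $\rho\colon G\hookrightarrow \mathrm{GL}(V)$. This is false as stated when $Z_G$ has positive dimension: take $G={\mathbb C}^*$ (every flat $G$--bundle is vacuously stable, as there are no proper parabolics) and $V={\mathbb C}^2$ with weights $0$ and $1$; then $E_G(V)=L\oplus {\mathcal O}_M$ with $L$ a flat line bundle, and in the Gauduchon setting $\deg_g L$ need not vanish, so $E_G(V)$ is in general not even semistable. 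Even for semisimple $G$, the implication ``$E_G$ stable $\Rightarrow$ $E_G(V)$ polystable'' is not available from the results you cite: Proposition \ref{prop2}, Corollary \ref{co2} and Corollary \ref{cor0} concern tensor operations on flat \emph{vector} bundles, not the passage from principal-bundle stability to polystability of associated bundles, and your one-line justification (a destabilizing flat subbundle of $E_G(V)$ yields a destabilizing parabolic reduction of $E_G$) at best rules out flat subbundles of strictly larger slope, i.e. gives semistability; polystability requires producing flat complements for equal-slope flat subbundles, which does not follow from stability of $E_G$ by that argument. The only instance of this implication the paper establishes is for the adjoint bundle (Corollary \ref{cor5}, via the socle), and the general statement is normally obtained as a \emph{consequence} of the Hermitian--Einstein correspondence, so invoking it here is circular. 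The subsequent Chevalley-tensor descent also silently needs the relevant tensor bundles to be polystable of degree zero, which rests on the same unproved step, and your reduction of the polystable case to the stable case (``admissibility forces the contracted curvature to be central'') is asserted rather than proved.

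For contrast, the paper never reduces to the stable case or uses a faithful representation. It takes the Hermitian--Einstein connection on the polystable bundle $\mathrm{ad}(E_G)$ (Corollary \ref{cor5} plus Theorem \ref{thm0}), shows this connection preserves the Lie bracket by viewing the bracket as a flat section of the degree-zero polystable bundle ${\mathcal H}om(\mathrm{ad}(E_G)\otimes\mathrm{ad}(E_G),\mathrm{ad}(E_G))$ and applying the vanishing theorem of \cite[p.~110, Theorem 3]{Lo} (the one correct ingredient your proposal also isolates), thereby obtains a Hermitian--Einstein connection on the $G/Z_G$--bundle $E_G/Z_G$, constructs the canonical Hermitian--Einstein connection on the torus bundle $E_G/[G,G]$, and splices the two through the isogeny $G\longrightarrow (G/Z_G)\times(G/[G,G])$, which is an isomorphism on Lie algebras; uniqueness then follows from uniqueness on $\mathrm{ad}(E_G)$ and on $E_G/[G,G]$. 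If you want to salvage your outline, the cleanest repair is to replace the faithful-representation step by exactly this adjoint-bundle argument.
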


\begin{proof}
First we assume that $E_G$ admits a Hermitian--Einstein structure.
A Hermitian--Einstein structure on $E_G$ induces a Hermitian--Einstein
metric on the adjoint vector bundle $\text{ad}(E_G)$. Hence
$\text{ad}(E_G)$ is polystable
\cite[p, 110, Theorem 4]{Lo}. Hence $E_G$ is polystable by
Corollary \ref{cor5}.

To prove the converse, assume $E_G$ is polystable. We will
first reduce to the case that $G$ is complex reductive.

If $G$ is the fixed point locus of a split
real form on $G_{\mathbb C}$, then
Corollary \ref{cor-n2} says that the corresponding
principal $G_{\mathbb C}$--bundle $E_{G_{\mathbb C}}$
is also polystable. In the following paragraphs, we produce a unique
Hermitian--Einstein connection on $E_{G_{\mathbb C}}$. Uniqueness implies
that it is invariant under a natural complex conjugation, as in the
proof of Corollary
\ref{cor-n1} above, and so the Hermitian--Einstein connection on
$E_{G_{\mathbb C}}$ reduces to a connection on $E_G$.

We assume that $G$ is complex reductive.
Let $Z_G\, \subset\, G$ be the center of $G$. The adjoint
action of $G/Z_G$ on $\mathfrak g$ is faithful. Since $G$ is
reductive, the quotient $G/[G\, ,G]$ is a product of copies
of ${\mathbb C}^*$.

Let $E_G\, \longrightarrow\, M$ be a flat polystable principal
$G$--bundle. Then the vector bundle $\text{ad}(E_G)$ is polystable
by Corollary \ref{cor5}. Let $\nabla({\rm ad})$ be the
Hermitian--Einstein connection on $\text{ad}(E_G)$.

We will show that the connection $\nabla({\rm ad})$ preserves
the Lie algebra structure of the fibers of $\text{ad}(E_G)$.

Let
$$
\theta_0\, :\, \text{ad}(E_G)\otimes \text{ad}(E_G)\,
\longrightarrow\, \text{ad}(E_G)
$$
be the homomorphism defined by the Lie algebra structure
of the fibers of $\text{ad}(E_G)$. Define the flat vector bundle
\begin{equation}\label{cW}
{\mathcal W}\,:=\,
{\mathcal H}om(\text{ad}(E_G)\otimes \text{ad}(E_G)\, ,
\text{ad}(E_G))\,=\, (\text{ad}(E_G)\otimes \text{ad}(E_G))^*
\otimes \text{ad}(E_G)\, .
\end{equation}
Let
\begin{equation}\label{theta}
\theta\, \in\, C^\infty(M,\, {\mathcal W})
\end{equation}
be the smooth section defined by the above homomorphism $\theta_0$.
We note that $\theta$ is flat with respect to the flat
connection on $\mathcal W$ induced by the flat connection on
$E_G$.

Fix a nondegenerate $G$--invariant symmetric bilinear form $B$ on
$\mathfrak g$; such a form exists because $G$ is either complex
reductive or the fixed point locus of a real form of a
complex reductive group.Since $B$ is $G$--invariant, it produces a
symmetric bilinear form $\widetilde B$ on $\text{ad}(E_G)$
which is fiberwise nondegenerate and is preserved by the flat
connection on $\text{ad}(E_G)$. Therefore, $\widetilde B$
produces an isomorphism of the flat vector bundle $\text{ad}(E_G)$
with its dual $\text{ad}(E_G)^*$. Hence
${\rm deg}_g(\text{ad}(E_G))\, =\,-{\rm deg}_g(\text{ad}(E_G)^*)
\, =\, -{\rm deg}_g(\text{ad}(E_G))$, implying that
$${\rm deg}_g(\text{ad}(E_G))\,=\, 0\, .$$
Hence
\begin{equation}\label{d1}
{\rm deg}_g({\mathcal W})\,=\, 0\, ,
\end{equation}
where ${\mathcal W}$ is defined in \eqref{cW}.

The Hermitian--Einstein
connection $\nabla({\rm ad})$ on $\text{ad}(E_G)$ induces
a connection on the vector bundle ${\mathcal W}$;
this induced connection will be denoted by $\widetilde{\nabla}$.
Since the connection $\nabla({\rm ad})$ is Hermitian--Einstein,
the connection $\widetilde{\nabla}$ is also
Hermitian--Einstein. Therefore, from \eqref{d1} it follows that
any flat section of ${\mathcal W}$ is also flat with respect
to the Hermitian--Einstein connection $\widetilde{\nabla}$
\cite[p. 110, Theorem 3]{Lo}. In particular, the section
$\theta$ in \eqref{theta} is flat with respect to
$\widetilde{\nabla}$. This immediately implies that
the connection $\nabla({\rm ad})$ preserves
the Lie algebra structure of the fibers of $\text{ad}(E_G)$.

Hence the connection $\nabla({\rm ad})$ produces a connection
on the flat principal $G/Z_G$--bundle $E_G/Z_G$. This connection
on $E_G/Z_G$ will be denoted by $\nabla'$. The connection
$\nabla'$ is Hermitian--Einstein because $\nabla({\rm ad})$ is so.

Since $G/[G\, ,G]\,=\, ({\mathbb C}^*)^d$, where $d$ is the
dimension of $Z_G$, the flat principal $G/[G\, ,G]$--bundle
$E_G/[G\, ,G]$ admits a unique Hermitian--Einstein connection;
this connection will be denoted by $\nabla''$.

The quotient map $G\, \longrightarrow\, (G/Z_G)\times (G/[G\, ,G])$
has the property that the corresponding homomorphism of Lie algebras
is an isomorphism. Therefore, there is a natural bijection between
the connections on a principal $G$--bundle $F_G$ and the
connections on the principal $G/Z_G)\times (G/[G\, ,G])$--bundle
obtained by extending the structure group of $F_G$ using the
above homomorphism $G\, \longrightarrow\, (G/Z_G)\times (G/[G\, ,G])$.
The connections $\nabla'$ and $\nabla''$ on $E_G/Z_G$ and $E_G/[G\, ,G]$
together define a connection on the principal
$G/Z_G)\times (G/[G\, ,G])$--bundle $(E_G/Z_G)\times_M (E_G/[G\, ,G])$
over $M$. By the above remark on bijection of connections, this
connection on $(E_G/Z_G)\times_M (E_G/[G\, ,G])$ produces a connection
on $E_G$. The connection on $E_G$ obtained this way is
Hermitian--Einstein because both $\nabla'$ and $\nabla''$ are so.

The uniqueness of a Hermitian--Einstein connection on $E_G$
follows from the uniqueness of the Hermitian--Einstein connections
on the vector bundle $\text{ad}(E_G)$ and the
principal $G/[G\, ,G]$--bundle $E_G/[G\, ,G]$.
This completes the proof of the theorem.
\end{proof}

\subsection{A Bogomolov type inequality}\label{sec.b}

As before, $M$ is a compact connected special flat affine manifold,
$g$ is a Gauduchon metric on $M$, and $\nu$ is a nonzero covariant
constant volume form on $M$.
Let $d$ be the dimension of $M$. The $(1\, ,1)$--form
given by $g$ will be denoted by $\omega_g$. We recall that
the Gauduchon condition says that
$$
\partial\overline{\partial}(\omega^{d-1}_g)\, =\,0
$$
(see \cite[p. 109]{Lo}).

The Gauduchon metric $g$ is called
\textit{astheno--K\"ahler} if
\begin{equation}\label{aK}
\partial\overline{\partial}(\omega^{d-2}_g)\, =\,0
\end{equation}
(see \cite[p. 246]{JY}).

We note that if $d\,=\, 2$, then $g$ is astheno--K\"ahler.
If $g$ is K\"ahler, then $g$ is also astheno--K\"ahler.

We assume that the Gauduchon metric $g$ is astheno--K\"ahler.

Let $E$ be a flat vector bundle on $M$. Take a Hermitian structure
$h$ on $E$. Using \eqref{aK} it follows that
$$
\int_M \frac{c_1(E,h)^2\omega^{d-2}_g}{\nu}\,\in\,{\mathbb R}
\, ~\text{~and~}\, ~ \int_M \frac{c_2(E,h)\omega^{d-2}_g}{\nu}
\,\in\,{\mathbb R}
$$
are independent of the choice of $h$.

\begin{lemma}\label{lem6}
Let $V\, \longrightarrow\, M$ be a semistable flat vector bundle
of rank $r$. Then
$$
\int_M \frac{c_2({\mathcal E}nd(V))\omega^{d-2}_g}{\nu}\, =\,
\int_M \frac{(2r\cdot c_2(V) -
(r-1)c_1(V)^2)\omega^{d-2}_g}{\nu}\, \geq\, 0\, .
$$
\end{lemma}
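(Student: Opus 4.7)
The equality $c_2(\mathcal{E}nd(V))=2r\,c_2(V)-(r-1)\,c_1(V)^2$ is a universal Chern class identity, easily verified by the splitting principle (or directly from $\mathcal{E}nd(V)=V\otimes V^*$ together with the standard formulas for $c_1$ and $c_2$ of a tensor product). Under the astheno--K\"ahler hypothesis \eqref{aK}, each of the integrals appearing in the lemma is independent of the choice of auxiliary Hermitian metric on $V$, so the two displayed integrals agree as real numbers. It therefore suffices to prove the inequality $\int_M c_2(\mathcal{E}nd(V))\wedge\omega_g^{d-2}/\nu\ge 0$, using crucially that $c_1(\mathcal{E}nd(V))=0$.

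First suppose $V$ is polystable. Then by Theorem \ref{thm0}, $V$ admits an affine Hermitian--Einstein metric $h$, and the metric induced on $\mathcal{E}nd(V)$ is itself Hermitian--Einstein. Since $c_1(\mathcal{E}nd(V))=0$, the corresponding Einstein constant must vanish, so the contracted curvature $\Lambda_g F^{\mathcal{E}nd}$ is identically zero. The classical pointwise Kobayashi--L\"ubke identity (valid on any Hermitian manifold) asserts that for any skew-Hermitian endomorphism-valued $(1,1)$--form $A$ satisfying $\Lambda_g A=0$,
$$
-\mathrm{tr}(A\wedge A)\wedge\omega_g^{d-2} \,=\, c_d\cdot |A|^2\,\omega_g^d
$$
for some strictly positive constant $c_d$ depending only on $d$. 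Combining this with the Chern--Weil representative $c_2(\mathcal{E}nd(V),h)=-\tfrac{1}{8\pi^2}\,\mathrm{tr}(F^{\mathcal{E}nd}\wedge F^{\mathcal{E}nd})$ yields the pointwise inequality $c_2(\mathcal{E}nd(V))\wedge\omega_g^{d-2}\ge 0$, which proves the lemma in the polystable case after integrating against $\nu^{-1}$.

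For a general semistable $V$, I would reduce to the polystable case by means of the socle filtration of Corollary \ref{cor4}. This yields a flat filtration $0=F_0\subset F_1\subset\cdots\subset F_n=V$ whose successive quotients $F_i/F_{i-1}$ are polystable of the common slope $\mu_g(V)$. Set $V^{\mathrm{pol}}:=\bigoplus_i F_i/F_{i-1}$, a polystable flat vector bundle of the same rank as $V$, and equip $V$ with the Hermitian metric coming from the $C^\infty$ splitting $V\cong V^{\mathrm{pol}}$ together with the Hermitian--Einstein metrics on the summands. A standard Chern--Weil bookkeeping for filtered bundles then expresses the Bogomolov integrand of $V$ as the block-diagonal contribution (which is exactly the Bogomolov integrand of $V^{\mathrm{pol}}$) plus an off-diagonal contribution given by a positive constant times the pointwise squared norms of the second fundamental forms of the filtration. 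Since the polystable case supplies the inequality for $V^{\mathrm{pol}}$, the inequality for $V$ follows.

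The main obstacle is the pointwise Kobayashi--L\"ubke identity used in the polystable step, together with the sign of the second fundamental form contribution in the semistable step: both are classical in the complex Hermitian setting, but in our context one must transport them to the affine Dolbeault complex in which $(p,q)$--forms are represented by sections of $\Lambda^p(T^*M)\otimes\Lambda^q(T^*M)$. The affine-to-complex dictionary outlined in Section \ref{sec2}, realizing affine objects on $M$ as fiberwise-invariant holomorphic objects on the total space of $TM$, should allow both identities to be read off verbatim from their classical complex counterparts. The astheno--K\"ahler hypothesis \eqref{aK} enters precisely to guarantee that the various integrands with a factor of $\omega_g^{d-2}$ represent well-defined metric-independent real numbers, a fact invoked twice above.
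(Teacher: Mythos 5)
Your reduction to the polystable case is correct and is essentially the paper's argument: the Chern class identity, the metric independence of both integrals under the astheno--K\"ahler hypothesis, and the polystable case via the Hermitian--Einstein metric together with the pointwise Kobayashi--L\"ubke computation (the paper simply cites L\"ubke and Li--Yau--Zheng for the pointwise nonnegativity of $(2r\,c_2(V,h)-(r-1)c_1(V,h)^2)\wedge\omega_g^{d-2}$ when $h$ is Hermitian--Einstein; your route through ${\mathcal E}nd(V)$ with vanishing Einstein factor is the same classical computation, up to sign conventions that would have to be fixed consistently in the affine Dolbeault complex).

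The semistable step, however, has a genuine gap. Write $\Delta(W)$ for $2\,{\rm rank}(W)c_2(W)-({\rm rank}(W)-1)c_1(W)^2$. Your claimed ``bookkeeping'' identity --- that with the split metric the Bogomolov integrand of $V$ equals that of $V^{\rm pol}=\bigoplus_i F_i/F_{i-1}$ plus a positive multiple of the squared norm of the second fundamental form --- cannot hold, and is in fact inconsistent with the metric independence you invoke: replacing your metric $\bigoplus_i h_i$ by $\bigoplus_i t^{2i}h_i$ (each summand still Hermitian--Einstein) leaves both integrals $\int_M\Delta(V)\omega_g^{d-2}/\nu$ and $\int_M\Delta(V^{\rm pol})\omega_g^{d-2}/\nu$ unchanged, but scales the off-diagonal part of the flat connection, hence the second fundamental form, by factors $t^{i-j}\to 0$ ($i<j$); so an identity with a strictly positive defect would force the filtration to split flatly. (In the K\"ahler model the two integrals are simply equal by the Whitney formula, and in general second-fundamental-form contributions to $c_2$-type forms are not of one sign.) What is actually needed --- and what the paper's one-line conclusion in effect asserts --- is the equality $\int_M\Delta(V)\omega_g^{d-2}/\nu=\int_M\Delta(V^{\rm pol})\omega_g^{d-2}/\nu$. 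This follows from metric independence plus a limiting argument: conjugating the flat connection by the block gauge transformations above yields flat bundles isomorphic to $V$ converging in $C^\infty$ to the graded flat bundle, so the constant value of the integral equals its limit, computed for $V^{\rm pol}$ with a fixed metric. One then applies the polystable case to $V^{\rm pol}$ as a single bundle, which is polystable because its summands are polystable of the common slope $\mu_g(V)$ and hence carry Hermitian--Einstein metrics with one and the same Einstein constant; note that applying the polystable case to the summands separately and summing would not suffice either, since $\Delta$ is not additive on direct sums and the $c_1$ cross terms are not controlled by the equal-slope hypothesis alone in this non--K\"ahler setting.
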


\begin{proof}
First assume that $V$ is polystable. Therefore, $V$ admits a
Hermitian--Einstein connection (see \cite[p. 102, Theorem 1]{Lo} for
complex case and
\cite[p. 129, Corollary 33]{Lo} for real case). Let $h$ be
a Hermitian--Einstein metric on $V$. Then the $d$--form
$$
\frac{(2r\cdot c_2(V,h) -
(r-1)c_1(V,h)^2)\omega^{d-2}_g}{\nu}
$$
on $M$ is pointwise nonnegative (see
\cite{Lu} and \cite[p. 107]{LYZ} for the computation). Therefore,
the lemma is proved for polystable vector bundles.

If $V$ is semistable, then there is a filtration of flat subbundles
$$
0\,=\, V_0\, \subset\, V_1\, \subset\, \cdots
\, \subset\, V_{n-1} \, \subset\, V_\ell\, =\, V
$$
such that $V_i/V_{i-1}$ is polystable for all $i\,\in\, [1\, ,
n]$, and also $\mu_g(V_i/V_{i-1})\,=\, \mu_g(V)$. Since
the inequality in the statement of the lemma holds for all
$V_i/V_{i-1}$, it also holds for $V$.
\end{proof}

Let $G$ be a connected Lie group such that it is either
a complex reductive linear algebraic group
or it is the fixed point locus of a split real form on a
complex reductive linear algebraic group

\begin{proposition}\label{prop6}
Let $E_G\,\longrightarrow
\, M$ be a flat semistable principal $G$--bundle. Then
$$
\int_M \frac{c_2({\rm ad}(E_G))\omega^{d-2}_g}{\nu}\, \geq\, 0\, .
$$
\end{proposition}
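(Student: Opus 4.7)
The plan is to reduce to Lemma \ref{lem6} applied to the flat vector bundle $V\,=\,\text{ad}(E_G)$, and then to show that the $c_1(\text{ad}(E_G))^2$ term drops out of the resulting inequality.

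First, by Corollary \ref{cor2}, the semistability of $E_G$ is equivalent to the semistability of the flat adjoint vector bundle $\text{ad}(E_G)$. Hence Lemma \ref{lem6} applies to $\text{ad}(E_G)$, which has rank $r\,=\,\dim G$, and produces
$$
\int_M \frac{(2r\cdot c_2(\text{ad}(E_G))-(r-1)c_1(\text{ad}(E_G))^2)\,\omega^{d-2}_g}{\nu}\,\geq\, 0\, .
$$

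The next step is to show that $\int_M c_1(\text{ad}(E_G))^2\,\omega_g^{d-2}/\nu\,=\,0$, so the left--hand side collapses to $2r\int_M c_2(\text{ad}(E_G))\,\omega_g^{d-2}/\nu$. The key point is that the character $\det\circ\text{Ad}\,:\,G\,\longrightarrow\,\mathbb{F}^{*}$ is trivial: for connected reductive $G$ one has $G\,=\,Z_G^{0}\cdot [G,G]$, the adjoint action is trivial on $Z_G^{0}$, and on the semisimple derived subgroup $[G,G]$ every character to $\mathbb{F}^{*}$ is trivial. The same reasoning applies when $G$ is the fixed locus of a split real form of $G_{\mathbb C}$. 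Consequently the flat line bundle $\bigwedge^{\rm top}\text{ad}(E_G)$ is trivial as a flat line bundle. Equipping it with the trivial flat Hermitian metric $h_0$ gives $c_1(h_0)\,=\,0$ pointwise, so $\int_M c_1(h_0)^2\omega_g^{d-2}/\nu\,=\,0$. Because $g$ is astheno--K\"ahler, the integral $\int_M c_1(\text{ad}(E_G),h)^2\omega_g^{d-2}/\nu$ is independent of the choice of Hermitian metric $h$ (as noted just before Lemma \ref{lem6}), so it vanishes for every $h$.

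Substituting this vanishing into the displayed inequality and dividing by $2r\,>\,0$ delivers the desired conclusion. In the split real case one could alternatively pass to $E_{G_{\mathbb C}}$, which is semistable by Corollary \ref{cor-n1}, and invoke $\text{ad}(E_{G_{\mathbb C}})\,=\,\text{ad}(E_G)\otimes_{\mathbb R}{\mathbb C}$ to transfer back; but the direct argument above is cleaner. The only step requiring care is the vanishing of $c_1$, where reductivity of $G$ is used essentially---an arbitrary connected Lie group need not have trivial $\det\circ\text{Ad}$; everything else is a routine application of results already established in the paper.
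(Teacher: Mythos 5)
Your proposal is correct and follows essentially the same route as the paper: semistability of $\mathrm{ad}(E_G)$ via Corollary \ref{cor2}, then Lemma \ref{lem6}, then vanishing of the $c_1^2$ term. The only (harmless) difference is in justifying that vanishing: the paper invokes the self-duality $\mathrm{ad}(E_G)\,\cong\,\mathrm{ad}(E_G)^*$, while you observe that $\det\circ\mathrm{Ad}$ is a trivial character of the reductive group $G$, so $\bigwedge^{\mathrm{top}}\mathrm{ad}(E_G)$ is flat-trivial and one may compute $\int_M c_1(\mathrm{ad}(E_G),h)^2\omega_g^{d-2}/\nu$ with a metric whose induced metric on the determinant is flat---both arguments are valid.
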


\begin{proof}
The vector bundle ${\rm ad}(E_G)$ is semistable because $E_G$
is semistable (see Corollary \ref{cor2}). Since ${\rm ad}(E_G)\,
=\, {\rm ad}(E_G)^*$, we have
$$
\int_M \frac{c_1({\rm ad}(E_G))^2\omega^{d-2}_g}{\nu}\,=\,0\, .
$$
Hence the proof is completed by Lemma \ref{lem6}.
\end{proof}

%%%%%%%%%%%%%%%%%%%%%%%%%%%%%%%%%%%%%%%%%%%%%%%%%%%%%%%%%%%%%%

\end{document}